\def\BBox{\rule{2mm}{3mm}}
\def\QED{\hfill$\BBox$}
\newenvironment{proof}
{\begin{rm}\par\smallskip\noindent{\bf Proof.}\quad}{\QED\end{rm}}
\newenvironment{proofidea}
{\begin{rm}\par\smallskip\noindent{\bf Proof idea.}\quad}{\QED\end{rm}}
\DeclareMathOperator{\st}{star}
\DeclareMathOperator{\rank}{rank}
\DeclareMathOperator{\conv}{conv}
\newtheorem{thm}{Theorem}[section]
\newtheorem{lem}[thm]{Lemma}        %
\newtheorem{prop}[thm]{\bfseries Proposition} %
\newtheorem{question}[thm]{\bfseries Question}
\newtheorem{defn}[thm]{\bfseries Definition}
\newtheorem*{goal}{\bfseries Goal}
\newcommand{\om}[2]{\textrm{OM}{(#1,#2)}} %
\newcommand{\ceil}[1]{\left\lceil {#1} \right\rceil} %
\newcommand{\ffloor}[2]{\left\lfloor{\frac{#1}{#2}}\right\rfloor} %
\newcommand{\PP}{P}
\begin{document}

\title{Enumerating neighborly polytopes and oriented matroids}

\author{
Hiroyuki Miyata 
\\
Graduate School of Information Sciences, \\
Tohoku University, Japan\\
{\tt hmiyata@dais.is.tohoku.ac.jp}
\and Arnau Padrol\\
Institut f\"ur Mathematik\\
Freie Universit\"at Berlin\\
{\tt arnau.padrol@fu-berlin.de}
}

\maketitle

\begin{abstract}
Neighborly polytopes are those that maximize the number of faces in each
dimension among all polytopes with the same number of vertices.
Despite their extremal properties they form a surprisingly rich class of
polytopes, which has been widely studied and is the subject of many open
problems and conjectures.

In this paper, we study the enumeration of neighborly polytopes beyond the
cases that have been computed so far. To this end, we enumerate neighborly
oriented matroids --- a combinatorial abstraction of neighborly
polytopes --- of small rank and corank.
In particular, if we denote by $\om{n}{r}$ the set of all oriented
matroids of rank~$r$ and $n$ elements, we determine all uniform neighborly
oriented matroids in $\om{5}{\leq12}$, $\om{6}{\leq9}$, $\om{7}{\leq11}$
and $\om{9}{\leq12}$ and all possible face lattices of neighborly oriented
matroids in $\om{6}{10}$ and $\om{8}{11}$. Moreover, we classify all
possible face lattices of uniform $2$-neighborly oriented matroids in
$\om{7}{10}$ and $\om{8}{11}$.
Based on the enumeration, we construct many interesting examples and test
open conjectures.
\end{abstract}

\section{Introduction}

The scarcity of examples (and counterexamples) is a central problem in the study of combinatorial properties of convex polytopes. This makes the enumeration of \emph{all} combinatorial types of $d$-polytopes with $n$ vertices a fundamental problem, even for small values of $d$ and $n$. This line of research was already started by Cayley and Kirkmann in the second half of the nineteenth century (see the historical remarks in Chapters~5, 6, 7 and 13 of~\cite{G03}). However, this is a computationally difficult problem, since realizability of $d$-polytopes is polynomially equivalent to the \emph{Existential Theory of the Reals (ETR)}~\cite{M1988,S1991}, and therefore NP-hard, already for $d=4$~\cite{RZ95}.

As a consequence, the computations become intractable already for very small values of $d$ and $n$. For $d$ polytopes with up to $d+3$ vertices there are closed formulas~\cite{F06}. In dimension~$3$, the problem is equivalent to enumerating $3$-connected planar graphs by Steinitz's Theorem, and the precise asymptotic behavior is known~\cite{BW88,T62}. For the remaining cases, the largest classes that have been completely enumerated and classified so far are $4$-polytopes with $8$ vertices and $5$-polytopes with $9$ vertices~\cite{AS85,FMM13} (see the introduction of~\cite{FMM13} for a summary of state of the art). 

This intrinsic difficulty has motivated the focus on the enumeration of smaller and specially interesting families of polytopes, in particular \emph{simplicial}~\cite{ABS80,G03,GS67} and \emph{neighborly} polytopes \cite{A77,AM73,AS73,BG87,BS87,BSt87,F14,G03,M74,S95}. A $d$-polytope is \emph{$k$-neighborly} if every subset of $k$ vertices forms a face, and it is called just \emph{neighborly} if it is $\ffloor{d}{2}$-neighborly. Neighborly polytopes are important for several reasons, among them the \emph{Upper Bound Theorem}, that states that simplicial neighborly polytopes are those that maximize the number of $i$-dimensional faces among all polytopes of fixed dimension and number of vertices~\cite{M70}. They form a very rich family. Actually, the current best lower bounds for the number of combinatorial types of polytopes is attained by neighborly polytopes~\cite{P13}.

In dimension at most~$3$ every polytope is neighborly, and there are explicit formulas for the number of neighborly $d$-polytopes with $d+3$ vertices \cite{AM73,M74}. Moreover, the enumeration of neighborly polytopes is known for $4$ and $6$-dimensional polytopes with up to $10$ vertices, as a result of the combined effort of several researchers during the 70's and 80's \cite{A77,AS73,BG87,BS87,BSt87,G03}. Recently, the enumeration of simplicial neighborly $5$-polytopes with $9$ vertices has been also completed~\cite{F14,FMM13}.

The usual approach to all these results starts by enumerating all possible \emph{oriented matroids} for given parameters, 
which provides a superset containing all possible combinatorial types of polytopes. Oriented matroids are a combinatorial abstraction 
for point configurations and hyperplane arrangements that can be enumerated in a much more efficient way. 
The second step of this process is to decide which of these oriented matroids admit realizations as point configurations. This is a the hardest part, which usually requires 
ad-hoc solutions and case-by-case analysis. Even when restricted to neighborly polytopes, realizability is NP-hard~\cite{AP14}.
The current state of the art in enumeration of oriented matroids can be found in~\cite{wom}. 
It summarizes results of several authors concerning enumeration of uniform and non-uniform oriented matroids and their 
realizability \cite{AAK02,AK07,FF02,FF03,FMM13,GSL89,GP80,G72,G03,R88}. General oriented matroids of ranks $3$ and $4$ 
(and their duals) have been respectively enumerated up to $10$ and $8$ elements, and their realizability is known for up to $9$ and $8$ elements;
uniform matroids are classified for up to $11$ and $9$ elements, and their realizability for up to $11$ and $8$. 
Neighborly oriented matroids of rank $5$ with $11$ elements were enumerated
by Schuchert in 1995~\cite{S95}.

\medskip

In this paper we approach the generation of neighborly oriented matroids. Our computational approach is based on single element extensions, 
as in \cite{BG00,FF02}. However, we first apply a satisfiability (SAT) solver to force neighborliness and reduce the number of candidate signatures 
(see Section~\ref{sec:computation}). The SAT problem is well known to be NP-complete, but can be solved efficiently in practice.
Since Schewe's work~\cite{S07}, SAT solvers have been successfully used in discrete geometry~\cite{BDHS13,BS11,MMIB12}. With the new method, 
we are able to completely enumerate the following new classes (where $\om{r}{n}$ represents the set of all oriented matroids of rank~$r$ with $n$ elements):

\begin{enumerate}
 \item all neighborly oriented matroids in $\om{5}{12}$, $\om{7}{11}$ and $\om{9}{12}$;
 \item all uniform neighborly oriented matroids in $\om{6}{9}$;
 \item all possible face lattices of uniform neighborly oriented matroids in $\om{6}{10}$ and $\om{8}{11}$.
\end{enumerate}
Additionally, we are also able to enumerate:%
\begin{enumerate}[resume]
 \item all possible face lattices of uniform $2$-neighborly oriented matroids in $\om{7}{10}$ and $\om{8}{11}$.
\end{enumerate}

The results of our enumeration are summarized in Table~\ref{tb:summaryneighborly}, that shows the number of neighborly oriented matroids (and the corresponding face lattices) and in Table~\ref{tb:summary2neighborly}, that shows the number of $2$-neighborly oriented matroids. %
In boldface we have stressed the results that were not known before.
The complete database is available at
\begin{center}
{\tt https://sites.google.com/site/hmiyata1984/neighborly\_{}polytopes}.
\end{center}

\begin{table}[htpb]
\begin{center}
{
\renewcommand{\arraystretch}{1.5}
\begin{footnotesize}
\begin{tabular}{c | c | c | c | c | c | c | c | c | c |}
        & n = 5 & n = 6 & n = 7 & n = 8 & n = 9 & n = 10 & n = 11& n = 12\\
 \hline
 r = 5  &  1  (1)   &   1  (1)    &   1  (1)   &    3  (3)    &  23  (23)    &  432  (432) & 13\,937  (13\,937) & \textbf{556\,144  (556\,144)} \\
 \hline
 r = 6  &             &   1  (1)    &    1  (1)   &   2  (2)  &    \textbf{10\,825}  (126)   &  unk.  \textbf{(159\,750)}  & unk. (unk.) & unk. (unk.)\\
 \hline
 r = 7  &       &       &   1  (1)     &   1  (1)  &   1  (1)  &  37  (37)  & \textbf{42\,910  (42\,910)} &  unk. (unk.)\\
 \hline
 r = 8  &       &       &        &  1  (1) &   1  (1)   &  4  (4)  & unk.  \textbf{(35\,993)} &  unk. (unk.)\\
 \hline
 r = 9  &       &       &        &    &   1  (1)  &  1  (1) & 1  (1) &  \textbf{2\,592  (2\,592)} \\
 \hline
\end{tabular}
\end{footnotesize}
}
\end{center}
\caption{The numbers of (relabeling classes of) neighborly uniform oriented matroids of rank~$r$ and $n$ elements (the numbers enclosed by brackets are the number of different face lattices of the corresponding oriented matroids). Boldface results are new.}
\label{tb:summaryneighborly}
\end{table}

\begin{table}[htpb]
\begin{center}
{
\renewcommand{\arraystretch}{1.5}
\begin{footnotesize}
\begin{tabular}{c | c | c | c | c | c | c |}
       & n = 7 & n = 8 & n = 9 & n = 10 & n = 11 \\
\hline
r = 7  &    1  (1)     &   2  (2)  &   \textbf{9  (9)}  &  unk. \textbf{(4\,523)} &  unk. (unk.) \\
\hline
r = 8  &         &  1  (1) &   2  (2)   &  \textbf{13  (13)} & unk. \textbf{(129\,968)}\\
\hline
\end{tabular}
\end{footnotesize}
}
\end{center}
\caption{The numbers of (relabeling classes of)  $2$-neighborly uniform oriented matroids of rank~$r$ and $n$ elements (the numbers enclosed by brackets are the number of different face lattices of the corresponding oriented matroids). Boldface results are new.}
\label{tb:summary2neighborly}
\end{table} 

To enumerate neighborly polytopes, we need to decide the realizability of these oriented matroids. We have performed some realizability tests (see Section~\ref{sec:realizability}) but we are still not able to completely classify these matroids among realizable and non-realizable. We can certify realizability for those polytopes obtained by sewing and Gale-sewing~\cite{P13}, and we have non-realizability certificates by biquadratic final polynomials~\cite{BR90}. Moreover, certain cases can be decided by studying their universal edges~\cite{RS91}. In particular, we are able to completely classify:

\begin{enumerate}[resume]
 \item all possible combinatorial types of neighborly $8$-polytopes with $12$ vertices.
\end{enumerate}

For the remaining cases, we only have upper and lower bounds. These results are summarized in Table~\ref{tb:summarypolytopes}, that shows upper and lower bounds for the 
 number of combinatorial types of (simplicial) neighborly polytopes (recall that every simplicial $d$-polytope with $n$ vertices corresponds to the face lattice of a uniform oriented matroid in $\om{d+1}{n}$). There is still a huge gap between the current upper and lower bounds for these numbers, although we expect the actual number to be closer to the upper bounds.
\begin{table}[htpb]
\begin{center}
{
\renewcommand{\arraystretch}{1.5}
\begin{footnotesize}
\begin{tabular}{c | c | c | c | c | c | c | c | c | c |}
        & n = 5 & n = 6 & n = 7 & n = 8 & n = 9 & n = 10 & n = 11& n = 12\\
 \hline
 d = 4 (r = 5) &  1    &   1    &   1   &    3    &  23    &  431 & \textbf{$\geq$ 3\,614}   & \textbf{$\leq$ 556\,144} \\
               &       &        &       &         &        &      &  \textbf{$\leq$ 13\,935}  &                         \\
 \hline
 d = 5 (r = 6) &       &    1   &    1    &   2   &    {126}   & \textbf{$\geq$ 8\,231}   & unk. & unk.\\
               &       &        &         &       &            &  \textbf{$\leq$ 159\,750} &      &      \\
 \hline
 d = 6 (r = 7) &       &       &   1     &   1  &   1  &  37  & \textbf{$\geq$ 11\,165}    &  unk. \\
               &       &       &         &      &      &      & \textbf{$\leq$ 42\,099}     &       \\
 \hline
 d = 7 (r = 8) &       &       &        &  1  &   1   &  4   &  \textbf{$\geq$ 35\,930} &  unk.\\
               &       &       &        &     &       &      &  \textbf{$\leq$ 35\,993} &      \\
 \hline
  d = 8 (r = 9) &       &       &        &    &   1   &  1   & 1  &  \textbf{ 2\,586 } \\

 \hline
\end{tabular}
\end{footnotesize}
}
\end{center}
\caption{The numbers of the combinatorial types of neighborly simplicial $d$-polytopes with $n$ vertices. Boldface results are new.}
\label{tb:summarypolytopes}
\end{table}

Among the direct consequences that can be deduced from this database there are the following results:
\begin{enumerate}[resume]
\item There are uniform neighborly oriented matroids without universal edges in $\om{5}{11}$, $\om{5}{12}$, $\om{7}{11}$ and $\om{9}{12}$ (only one such example, in $\om{5}{10}$, was known~\cite{BSt87}).
The latter (together with their realizability) gives a positive answer to a question by Richter and Sturmfels~\cite{RS91} concerning the existence of 
neighborly $2k$-polytopes with $2k+ 4$ vertices without universal edges.
\item There is a simplicial $5$-polytope with $9$ vertices that is not a quotient of any neighborly $8$-polytope with $12$ vertices (however, every simplicial $d$-polytope with $d+4$ vertices is a quotient of a neighborly $(2d+4)$-polytope with $(2d+8)$ vertices~\cite{K97}).
\item There are no oriented matroids ${\cal M}$ in $\om{5}{12}$ or $\om{7}{12}$ with an element $e$ such that ${\cal M}\setminus e$ is neighborly and ${\cal M}^*\setminus e$ is also neighborly. This implies that the equality case of the \emph{affine generalized upper bound conjecture}, if true, is probably not sharp for the $\leq \frac{n-d-1}{2}$ levels of a $d$-dimensional arrangement of $n$ affine halfspaces (this provides a negative answer to a question of Wagner, see~\cite{W06}).
\item The determinants of edge-valence matrices do not separate realizable neighborly oriented matroids and non-realizable neighborly oriented matroids (this implies that an observation by Bokowski and Shemer~\cite{BS87} does not hold in general).
\item Each of our simplicial neighborly $d$-polytopes with $n$ vertices satisfies property $S^*(\ffloor{n-d+1}{2}-1)$ (considered in \cite{HPT08}) but not necessarily property $S^*(\ffloor{n-d+1}{2})$ (it is remarked in \cite{HPT08} that the cyclic $(n-2k+1)$-polytope with $n$ vertices satisfies property $S^*(k)$ when $n$ is odd).
\end{enumerate}

Moreover, our results provide more evidence for the following problems and conjectures:
\begin{enumerate}[resume]
 \item Every stacked matroid polytope in $\om{4}{\leq 11}$ is a vertex figure of a rank $5$ neighborly oriented matroid (in \cite[Problem~1]{AS73}, 
 Altshuler and Steinberg asked if every stacked $3$-polytope is a vertex figure of a neighborly polytope).
 \item Every $2$-stacked ($2$-)neighborly $5$-polytope with $9$ vertices is a vertex figure of a ($3$-)neighborly $6$-polytope (Bokowski and Shemer~\cite{BS87} generalized Altshuler and Steinberg's question to whether
 every $(m-1)$-stacked $(m-1)$-neighborly $(2m-1)$-polytope is a vertex figure of an $m$-neighborly $2m$-polytope).
 \item Each uniform neighborly oriented matroid in $\om{r}{n}$ for which we have enumerated $\om{r}{n+1}$ can be extended to a neighborly oriented  matroid in $\om{r}{n+1}$ (whether this holds for all neighborly polytopes was asked in~\cite{S82}). 
 \item Each of the duals of our simplicial neighborly polytopes has a Hamiltonian circuit (this was proved for cyclic polytopes by Klee in \cite{K66}, where he suggested the study of the even-dimensional neighborly polytopes with this property).
 \item Each of the duals of our simplicial neighborly polytopes satisfies the Hirsch conjecture (this was known for these combinations of rank and corank~\cite{BDHS13}, but we make a complete classification of the oriented matroids in terms of their facet-ridge graph diameter).
\end{enumerate}

\section{Preliminaries and Notation}\label{preliminaries}
In this section, we provide basic definitions and notation on oriented matroids. For a comprehensive introduction to oriented matroids, see~\cite{OM}. 
We use the notation $[n]$ to denote the set $\{ 1, 2, \dots, n\}$ for $n \in \mathbb{N}$.
\subsection{Oriented matroids}
Let $E$ be a finite set.  An element of $\{ +,-,0\}^E$ is called a \emph{sign vector} on $E$; and if $X$ is a sign vector then $X^s$ denotes the set $\{ e \in E \mid X_e = s\}$ for $s \in \{ +,-,0 \}$.
 The \emph{composition} of two sign vectors $X$ and $Y$ is the sign vector $X \circ Y \in \{ +,-,0 \}^E$ such that
\[
(X \circ Y)_e = 
\begin{cases}
X_e & \text{if $X_e \neq 0$,} \\
Y_e & \text{otherwise}
\end{cases}
\]
for all $e \in E$.
Given two sign vectors $X$ and $Y$, their \emph{separation set} $S(X,Y)$ is 
\[ S(X,Y) = \{ e \in E \mid X_e = - Y_e  \neq 0 \}.\]
The \emph{reorientation} of a sign vector $X$ on a subset $A\subseteq E$ is the sign vector $_{-A}X$ fulfilling $(_{-A}X)^+ = (X^+ \setminus A) \cup (X^- \cap A)$ and $(_{-A}X)^- = (X^- \setminus A) \cup (X^+ \cap A)$.

We are ready to define \emph{oriented matroids} (by their covector axioms, see~\cite{OM} for other axiomatics).
\begin{defn}[Covector axioms]
An \emph{oriented matroid} on the ground set $E$ is a pair ${\cal M}=(E,{\cal V}^*)$, where ${\cal V}^* \subseteq \{ +,-,0\}^E$  --- the set of \emph{covectors} of $\cal M$ --- satisfies the following axioms:
\begin{itemize}
\item[(L1)] ${\bm 0} \in {\cal V}^*$,
\item[(L2)] $X \in {\cal V}^*$ implies $-X \in {\cal V}^*$,
\item[(L3)] $X,Y \in {\cal V}^*$ implies $X\circ Y\in {\cal V}^*$,
\item[(L4)] if $X,Y \in {\cal V}^*$ and $e \in S(X,Y)$ then there exists $Z \in {\cal V}^*$
such that $Z_e = 0$ and $Z_f=(X\circ Y)_f=(Y\circ X)_f$ for all $f\notin S(X,Y)$. 
\end{itemize}
\end{defn}

The set of all covectors of ${\cal M}$ is denoted by ${\cal V}^*({\cal M})$ and 
admits a natural partial order
\[
X \preceq Y \Leftrightarrow \text{for all $e \in E$, $X_e = Y_e$ or $X_e = 0$}\ (\Leftrightarrow X\circ Y=Y).
\] 
The poset $({\cal V}^*({\cal M})\cup {\bm 1}, \preceq)$, where ${\bm 1}$ is a top element, is a lattice, called the \emph{big face lattice} of ${\cal M}$. The minimal non-zero elements of ${\cal V}^*({\cal M})$
are called the \emph{cocircuits} of $\cal M$, and denoted by ${\cal C}^*({\cal M})$. Every non-zero covector can be written as $V = C_1 \circ \dots \circ C_m$ for some $C_1,\dots,C_m \in {\cal C}^*({\cal M})$.

The \emph{rank} of ${\cal M}$, denoted $\rank({\cal M})$, is the rank of its underlying matroid. It coincides with $\rho ({\bm 1})-1$, where $\rho$ is the rank function of the big face lattice of ${\cal M}$. 
For a covector $X \in {\cal V}^*$, we define its \emph{rank} as $\rank_{\cal M}(X):=\rho({\bm 1})-\rho (X)$.
An oriented matroid ${\cal M}$ is \emph{uniform} if the underlying matroid is uniform; equivalently, if $|X^0| = \rank_{{\cal M}}(X) - 1$ for every covector $X \in {\cal V}^*({\cal M})$.

\subsection{Realizable oriented matroids}
To each vector configuration $W=(w_1,\dots,w_n)$ in $d$-dimensional Euclidean space, we can naturally associate the oriented matroid ${\cal M}_W = ([n],{\cal V}^*_W)$
of rank $d$, where
\[ {\cal V}^*_{W} := \{ ({\rm sign} (c^Tw_1), \dots, {\rm sign} (c^Tw_n)) \mid c \in \mathbb{R}^d \}. \]
We say that an oriented matroid ${\cal M}$ is \emph{realizable} if there is a vector configuration $W$ such that 
${\cal V}^*({\cal M}) = {\cal V}^*_W$, which we call a realization of $\cal M$. %

For a point configuration $P=(p_1,\dots,p_n)$ in $d$-dimensional Euclidean space,
its \emph{associated vector configuration} $V_P=(v_1,\dots,v_n)$ consists of the homogenized vectors
$v_i := (p_i, 1)$ in ($d+1$)-dimensional Euclidean space and its \emph{associated oriented matroid} is
${\cal M}_P = ([n], {\cal V}_{V_P})$.
Note that ${\cal M}_P$ contains the all-positive covector.
The point configuration $P$ is in convex position if and only if for each $e \in [n]$ the covector $X_e$
with $(X_e)^0 = \{ e \}$ and $(X_e)^+ = [n] \setminus \{ e \}$ is a covector of ${\cal M}_P$.
Moreover, the convex hull of $F= \{ p_{i_1},\dots,p_{i_k} \}$ is a face of $\conv(P)$
if and only if ${\cal M}_P$ has the covector $X_F$ 
such that $(X_F)^+ = [n] \setminus \{ i_1,\dots,i_k\}$ and $(X_F)^0 = \{ i_1,\dots,i_k \}$.
Finally, the point configuration is in general position if and only if the associated matroid is uniform. 
In particular, simplicial polytopes always have at least one realization with a uniform associated oriented matroid.

The following definitions are motivated by these observations.

\begin{defn}%
An oriented matroid is \emph{acyclic} if it contains the all-positive covector.
\end{defn}

\begin{defn}%
Let ${\cal M}$ be an acyclic oriented matroid of rank $r$ on a ground set $E$.
A set $F \subseteq E$ is a \emph{face} of ${\cal M}$ if there is the covector $X_F$ of ${\cal M}$ such that $(X_F)^+ = E \setminus F$ and $(X_F)^0 = F$.
The poset formed by all faces of an oriented matroid ${\cal M}$ is called the \emph{face lattice} (or \emph{Las Vergnas lattice}) of ${\cal M}$.
\end{defn}

\begin{defn}
 An acyclic oriented matroid on a ground set $E$ is called a \emph{matroid polytope}\footnote{This concept should not be confused with the \emph{matroid basis polytope}, the convex hull of the indicator vectors of bases of a matroid, which is sometimes also called a \emph{matroid polytope}.} if every element is a face; that is, 
if for every $e \in E$ it has the covector $X_e$
with $(X_e)^0 = \{ e \}$ and $(X_e)^+ = E \setminus \{ e \}$.
\end{defn}

An oriented matroid can be realized by (the associated vector configuration of) some point configuration (resp.\ point configuration in convex position)
if and only if it is a realizable acyclic oriented matroid (resp.\ realizable matroid polytope). In that case, the face lattice of the convex hull of the point configuration coincides with the Las Vergnas lattice of the oriented matroid.
\subsection{Basic operations for oriented matroids}

Let ${\cal M} = (E,{\cal V}^*)$ be an oriented matroid.
For $F \subseteq E$,
\[ {\cal M} \setminus F := (E \setminus F, {\cal V}^*|_{E \setminus F}),
\text{ where ${\cal V}^*|_{E \setminus F} := \{ X|_{E \setminus F} \mid X \in {\cal V}^*\}$} \] 
is also an oriented matroid. It is called the \emph{deletion} of $F$ in ${\cal M}$.
The deletion ${\cal M} \setminus (E \setminus F)$ is also called the \emph{restriction} of ${\cal M}$ to $F$ and denoted by ${\cal M}|_F$.

Similarly, for $F \subseteq E$,
\[ {\cal M} / F := (E \setminus F, {\cal V}^*/F),
\text{ where ${\cal V}^*/F := \{ X|_{E \setminus F} \mid X \in {\cal V}^*, X^0 \supseteq F\}$} \] 
is an oriented matroid. It is called the \emph{contraction} of $F$ in ${\cal M}$.
If $P$ is a polytope and $v$ one of its vertices, then the associated oriented matroid of the vertex figure $P/v$ is the contraction ${\cal M}_{P/v}={\cal M}_{P}/v$; and if $F$ is a face of $P$, the associated oriented matroid of the quotient $P/F$ is ${\cal M}_{P/F}={\cal M}_{P}/F$.

If an oriented matroid ${\cal M}$ of rank $r$ on a ground set $E$ can be written as
${\widehat {\cal M}} |_E$ for some oriented matroid ${\widehat {\cal M}}$ of rank $r$ 
on a ground set $F$ such that $E \subseteq F$ and $|F \setminus E|=1$,
the oriented matroid ${\widehat {\cal M}}$ is said to be a \emph{single element extension} of ${\cal M}$.
An important observation is that for any $X \in C^*({\cal M})$, there is a unique sign $\sigma(X) \in \{ +,-,0\}$ satisfying $(X,\sigma (X)) \in C^*({\widehat {\cal M}})$.
The assignment $\sigma : C^*({\cal M}) \rightarrow \{ +,-,0\}$ determined in this way is called a \emph{localization}.
Another important observation is that
${\widehat {\cal M}}$ is uniquely determined by ${\cal M}$ and the localization $\sigma : C^*({\cal M}) \rightarrow \{ +,-,0\}$.
Therefore, all possible single element extensions of ${\cal M}$ can be enumerated by enumerating localizations.
For more details, see \cite[Section 7.1]{OM}.

\subsection{Isomorphisms of oriented matroids}
There are several kinds of natural isomorphisms for oriented matroids.
\begin{defn}[Equivalence relations for oriented matroids]\ 

\begin{itemize}
\item Two oriented matroids ${\cal M} = (E_{\cal M}, {\cal V}_{\cal M}^*)$ and ${\cal N} = (E_{\cal N}, {\cal V}_{\cal N}^*) $ 
are \emph{relabeling equivalent} if there is a bijection $\phi : E_{\cal M} \rightarrow E_{\cal N}$ such that 
$X \in {\cal V}_{\cal M}^* \Leftrightarrow \phi (X) \in {\cal V}_{\cal N}^*$.
\item Two oriented matroids ${\cal M} = (E_{\cal M}, {\cal V}_{\cal M}^*)$ and ${\cal N} = (E_{\cal N}, {\cal V}_{\cal N}^*) $ 
are \emph{reorientation equivalent}
 if there exists a subset $A \subseteq E_{\cal M}$ such that $_{-A}{\cal M}$ and ${\cal N}$ are relabeling equivalent,
  where $_{-A}{\cal M}$ is the \emph{reorientation} of ${\cal M}$ by $A$, i.e.,
  the oriented matroid with the set of covectors $\{ _{-A}X \mid X \in {\cal V}^*\}$.
\item Two matroid polytopes ${\cal M}$ and ${\cal N}$ have the same \emph{combinatorial type} if they have isomorphic (Las~Vergnas) face lattices.
\end{itemize}
\end{defn}

Throughout the paper, when we refer to numbers of oriented matroids, we consider them up to {relabeling equivalence}. Whenever we enumerate {combinatorial types} we state it explicitly.

\subsection{Neighborly oriented matroids}
Neighborly matroid polytopes are the oriented-matroid generalization of neighborly polytopes (see~\cite{St88} or \cite[Section~9.4]{OM}). 
\begin{defn}%
An oriented matroid ${\cal M}$ of rank $r$ on a ground set $E$ is 
\emph{$k$-neighborly} if every $k$-subset of $E$ is a face of ${\cal M}$.
A \emph{neighborly oriented matroid} or \emph{neighborly matroid polytope}\footnote{In the literature, and here, both terms \emph{neighborly oriented matroid} and \emph{neighborly matroid polytope} are used interchangeably. We only consider matroid polytopes, but is also natural to define acyclic matroids as $0$-neighborly oriented matroids.} is a matroid polytope that is $\ffloor{r-1}{2}$-neighborly.
\end{defn}
One of the outstanding properties of even-dimensional neighborly polytopes, first observed by Shemer in~\cite{S82}, 
is that they are \emph{rigid}. This was extended in \cite{St88} to neighborly oriented matroids of odd rank.
\begin{defn}
 An oriented matroid ${\cal M}$ is \emph{rigid} if it is uniquely determined by its face lattice, i.e., 
 any oriented matroid with the same face lattice coincides with ${\cal M}$.
\end{defn}

\begin{thm}[{{\cite{S82}\cite[Theorem~4.2]{St88}}}]
Neighborly oriented matroids of odd rank are {rigid}.
\end{thm}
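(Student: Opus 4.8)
The plan is to show that for a neighborly oriented matroid $\mathcal{M}$ of odd rank $r = 2m+1$, the face lattice determines all the cocircuits, hence (by the standard fact that an oriented matroid is determined by its cocircuit set) determines $\mathcal{M}$ itself. The key structural input is that $\mathcal{M}$ is $m$-neighborly, so every $m$-subset of $E$ is a face. First I would recall the correspondence between faces and covectors: a set $F$ is a face exactly when the covector $X_F$ with $(X_F)^0 = F$, $(X_F)^+ = E \setminus F$ belongs to $\mathcal{V}^*(\mathcal{M})$; in the uniform case a face of size $\le m$ is precisely a face of the matroid polytope, and these covectors have rank $|F| + 1 \le m+1 = \tfrac{r+1}{2}$.

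The crux is to reconstruct the cocircuits, which are the covectors of rank $1$, equivalently (in the uniform case) the sign vectors with exactly $r - 1 = 2m$ zeros. Fix a cocircuit $C$ and let $Z = C^0$, a set of size $2m$. The idea is to split $Z$ into two halves $Z = F^+ \sqcup F^-$ of size $m$ each, where $F^+ = Z \cap (\text{positive side})$ in a sense made precise by the covectors: by $m$-neighborliness both $F^+$ and $F^-$ (being $m$-subsets) are faces, so the covectors $X_{F^+}$ and $X_{F^-}$ lie in $\mathcal{V}^*(\mathcal{M})$, and crucially these covectors are read off directly from the face lattice (they are the covectors with all-plus support outside an $m$-face). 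Then I would argue that $C$ is recovered from $X_{F^+}$ and $X_{F^-}$ by an elimination/composition argument using axioms (L3) and (L4): composing $X_{F^+}$ with a suitable reorientation, or eliminating along $Z$, produces a covector supported exactly on $Z^0$, whose two sign classes on $Z$ are forced to be $F^+$ positive and $F^-$ negative. More precisely, one shows that for the correct bipartition, $C$ is the unique cocircuit with $C^+ \supseteq$ (complement data coming from $X_{F^-}$) and $C^- \supseteq$ (data from $X_{F^+}$); the signs on $E \setminus Z$ are then determined because a cocircuit in a rank-$r$ uniform oriented matroid is determined up to sign by its zero set, and the sign is pinned down by requiring consistency with the two half-face covectors. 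Running over all bipartitions of all potential zero sets recovers the full cocircuit set $\mathcal{C}^*(\mathcal{M})$ from the face lattice, and then $\mathcal{V}^*(\mathcal{M})$ is the set of compositions of cocircuits, so $\mathcal{M}$ is determined.

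The main obstacle is the bipartition step: given the zero set $Z$ of a cocircuit $C$, one must identify which $m$-subsets $F^+, F^-$ of $Z$ correspond to the positive and negative parts of a covector that "certifies" $C$, and show this can be detected purely from the face lattice without already knowing $C$. The resolution uses that in an $m$-neighborly matroid polytope of rank $2m+1$, the minimal non-faces have size exactly $m+1$ (a Radon-type partition), so the relevant structure — which $(m+1)$-sets fail to be faces and how they split — is encoded in the face lattice; one then matches these minimal non-faces with cocircuit zero sets. I would also need to handle the non-uniform case if claimed, but as stated the theorem is used for uniform matroids of odd rank, where the counting of zeros is clean; the general argument follows the same pattern with "$|X^0| = \rank_{\mathcal{M}}(X) - 1$" relaxed to an inequality, at the cost of a more delicate elimination bookkeeping. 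This is exactly the argument of Shemer adapted to oriented matroids by Sturmfels, so I expect the write-up to cite \cite{S82,St88} for the detailed elimination and focus on the face-to-covector dictionary.
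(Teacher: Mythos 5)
The paper does not prove this theorem: it states it and cites Shemer~\cite{S82} and Sturmfels~\cite[Theorem~4.2]{St88}, so there is no internal proof to compare against. Your high-level plan --- show the face lattice determines all the signed circuit data and hence the oriented matroid --- is the right one, but your execution has a real gap: you try to reconstruct the \emph{cocircuits}, and the elimination step you sketch does not produce them. If $Z = C^0$ with $|Z| = 2m$ and $Z = F^+ \sqcup F^-$ with $|F^\pm| = m$, then $X_{F^+}$ and $-X_{F^-}$ have separation set $E \setminus Z$; eliminating an element $e \in E \setminus Z$ yields a covector $W$ with $W_e = 0$ and with $W$ agreeing with $X_{F^+} \circ (-X_{F^-})$ on $Z$, i.e.\ $W$ is $-$ on $F^+$ and $+$ on $F^-$. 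Such a $W$ is nonzero on all of $Z$, so it cannot be the cocircuit $C$, which vanishes on $Z$. Your later remark that $C$ has ``two sign classes on $Z$'' is internally inconsistent for the same reason: a cocircuit with $C^0 = Z$ carries no signs on $Z$, so there is no bipartition of $Z$ to recover.

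The argument of Shemer and Sturmfels works with \emph{circuits}, not cocircuits. In a uniform $\mathcal{M}$ of rank $r = 2m+1$ a circuit $C$ has $|C^+|+|C^-| = r+1 = 2m+2$, and the key lemma --- which you gesture at near the end but attach to the wrong object --- is that $m$-neighborliness forces the Radon partition to be balanced, $|C^+| = |C^-| = m+1$. Indeed, if $|C^+| \le m$ then $C^+$ is a face, and the face covector $X_{C^+}$ (zero on $C^+$, positive elsewhere) fails orthogonality with $C$: the componentwise products are $0$ on $C^+ \cup C^0$ and strictly negative on $C^- \ne \emptyset$, so the $+$ sign never occurs. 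From this one shows that the signed circuit through each $(2m+2)$-subset is determined by which $(m+1)$-subsets are non-faces, and hence $\mathcal{M}$ is determined by its face lattice. So the minimal non-faces of size $m+1$ must be matched to circuit \emph{supports} of size $2m+2$, not to cocircuit \emph{zero sets} of size $2m$ as you propose, and the decisive step is the one-sided orthogonality of a face covector with a circuit, not an elimination along $Z$. (Incidentally, you need not worry about the non-uniform case separately: by Lemma~\ref{lem:uniform}, neighborly oriented matroids of odd rank are automatically uniform.)
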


Therefore, two neighborly oriented matroids of odd rank are relabeling equivalent if and only if
they have the same combinatorial type.

A final observation that we will allude to later.
\begin{lem}
[{{cf. \cite[Remark~9.4.10]{OM}}}]
\label{lem:uniform}
Neighborly oriented matroids of odd rank are always uniform. 
\end{lem}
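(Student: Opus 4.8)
The plan is to argue by contradiction: suppose $\mathcal{M}$ is a neighborly oriented matroid of odd rank $r = 2m+1$ that is not uniform. By the uniformity characterization in the excerpt, there is a covector $X \in \mathcal{V}^*(\mathcal{M})$ with $|X^0| \geq \rank_{\mathcal{M}}(X)$, equivalently the underlying matroid has a dependent set of size $\rank_{\mathcal{M}}(X)$ among the zero set of $X$. I would first pass to a cocircuit-level statement: since every non-zero covector is a composition of cocircuits, and since non-uniformity of the underlying matroid is detected on cocircuits, there is a cocircuit $C \in \mathcal{C}^*(\mathcal{M})$ with $|C^0| \geq r$, i.e. the complementary hyperplane (the zero set of $C$) spans a flat of rank $r-1$ that contains at least $r$ of the $n$ elements. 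Equivalently, in matroid terms, the underlying matroid $\underline{\mathcal{M}}$ has a circuit of size at most $r$ contained in a hyperplane.

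The key step is to exploit neighborliness: $\mathcal{M}$ is $m$-neighborly with $m = \floor{(r-1)/2}$, so every $m$-subset of $E$ is a face. First I would recall the standard fact (provable directly from the face/covector definitions, or cited from \cite{OM}) that in an $m$-neighborly matroid polytope every subset of size at most $m$ is not only a face but an \emph{independent} set of the underlying matroid — a face of size $k$ spans a flat of rank $k$, so faces of size $\le m$ are independent. Hence $\underline{\mathcal{M}}$ has no dependent set of size $\le m$, and in particular any circuit has size at least $m+1$. Combined with the previous paragraph, we get a circuit $Z$ with $m+1 \le |Z| \le r = 2m+1$ lying inside a hyperplane $H$ (the zero set of the offending cocircuit $C$).

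Now I would derive the contradiction from the interplay between the circuit $Z \subseteq H$ and neighborliness applied to a carefully chosen subset. The hyperplane $H$ has rank $r-1 = 2m$; pick $m+1$ elements of the circuit $Z$ — this is possible since $|Z| \ge m+1$ — but a circuit is minimally dependent, so any $m$ of its elements are independent while the full set of $m+1$ chosen elements is independent only if $|Z| > m+1$; the delicate case is $|Z| = m+1$, where the chosen $(m+1)$-set is exactly the circuit and hence dependent, directly contradicting that all $(m+1)$-subsets... wait, neighborliness only gives us $m$-subsets as faces when $r = 2m+1$. So the real argument must use that the circuit sits inside a hyperplane together with a counting/rank argument: the complement $E \setminus H$ has the property that every $m$-subset of it, composed appropriately with the cocircuit $C$, would have to produce a face, but the flat spanned is forced to have rank $\le (r-1) - |Z \cap (\text{that set})| + \dots$ — I expect the clean route is the one in \cite{St88}: a non-uniform neighborly matroid polytope of rank $2m+1$ would have, by the above, a hyperplane containing $\ge 2m+1$ points, and then one shows the "missing" point configuration on that hyperplane violates $m$-neighborliness of the contraction, or equivalently one uses that neighborliness forces every hyperplane to contain at most $r-1$ elements when $r$ is odd.

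\textbf{Main obstacle.} The delicate point is getting from "non-uniform underlying matroid" to a genuine contradiction with $m$-neighborliness, because $m$-neighborliness is a relatively weak hypothesis (it only controls subsets of size $\le m \approx r/2$, not all of size $r-1$). The honest way is to use the full strength of the structure of faces in neighborly matroid polytopes — in particular that for a neighborly matroid polytope of rank $2m+1$, a parity/Radon-partition argument (the same one behind rigidity, Theorem in the excerpt) shows that the faces of size $> m$ are determined, and a dependent set of size $\le 2m$ inside a hyperplane would create two distinct faces with the same vertex set or a face that is not a simplex, contradicting either rigidity-type uniqueness or the defining covector conditions. I would therefore structure the proof to invoke Lemma's companion results from \cite[Section 9.4]{OM} on the combinatorics of neighborly matroid polytopes rather than reproving them, and concentrate the new content on the short deduction that a non-simplicial face (equivalently a circuit of size $\le r-1$) is incompatible with $\floor{(r-1)/2}$-neighborliness in odd rank.
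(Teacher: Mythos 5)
Your contradiction framework --- a non-uniform $\mathcal M$ of rank $r=2m+1$ has a circuit $Z$ of size at most $r$, and this should conflict with $m$-neighborliness --- is the right one, but the proof does not close, as you yourself acknowledge in the ``Main obstacle'' paragraph. The specific missing ingredient is the signed-circuit (Radon partition) characterization of $k$-neighborliness for matroid polytopes: $\mathcal M$ is $k$-neighborly if and only if \emph{every} signed circuit $Z$ satisfies $|Z^+|\geq k+1$ \emph{and} $|Z^-|\geq k+1$. (This follows from the observation that a set $F$ fails to be a face of the matroid polytope exactly when some circuit $Z$ has $\emptyset\neq Z^-\subseteq F$ and $Z^+\cap F=\emptyset$; it is the standard tool in \cite[Section~9.4]{OM} and \cite{St88}, also behind the rigidity theorem you allude to.) You only extract the far weaker consequence $|Z|\geq m+1$ from ``all $m$-subsets are independent,'' discarding all sign information, which is why a circuit of size between $m+1$ and $r$ sitting inside a hyperplane gives you nothing to push against.

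Once the signed characterization is in hand, the lemma is immediate, and all the other machinery in your proposal --- passing to a cocircuit with $|C^0|\geq r$, locating a circuit in that hyperplane flat, independence of $m$-sets, rigidity --- becomes superfluous. If $\mathcal M$ is an $m$-neighborly matroid polytope of rank $r=2m+1$, then every signed circuit $Z$ has $|Z|=|Z^+|+|Z^-|\geq 2(m+1)=r+1$; since every circuit of a rank-$r$ matroid has at most $r+1$ elements, every circuit has size exactly $r+1$, which is precisely uniformity. This also makes the role of odd rank transparent: for $r=2m+2$ the same bound gives only $|Z|\geq r$, which permits circuits of size $r$ and hence non-uniform neighborly oriented matroids, as indeed exist. (Note that the paper itself gives no proof and simply points to \cite[Remark~9.4.10]{OM}, where this circuit argument appears.)
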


\subsection{More notation}

Additionally, we use the following notation, where $\cal M$ is an oriented matroid, $E$ its ground set and $X$ is a covector:
\begin{multicols}{2}
\begin{itemize}[itemsep=0mm,leftmargin=*]
\item $\Lambda(n,m):=\{ (i_1,\dots,i_m) \mid 1\! \leq\! i_1\! <\! \dots\! <\! i_m\! \leq\! n\}$ for $m,n \in \mathbb{N}$.
\item $Z(X):= \{ e \in E \mid X_e = 0 \}$ $(= X^0)$.
\item $P_i({\cal M})$: non-negative covectors of $\cal M$ of rank~$i$.
\item $P({\cal M})$: non-negative cocircuits of $\cal M$.
\item $N({\cal M})$: non-positive cocircuits of $\cal M$.
\end{itemize}
\end{multicols}

\section{Algorithm for enumerating neighborly oriented matroids}\label{sec:computation}
\begin{goal}
 Given $r,n,k \in \mathbb{N}$ with $n \geq r+2$ and $k \leq \lfloor \frac{r-1}{2} \rfloor$, enumerate all possible rank $r$ uniform (realizable) $k$-neighborly oriented matroids 
on the ground set $[n]$, up to relabeling equivalence.
\end{goal}
 
Our approach to enumerate all uniform $k$-neighborly oriented matroids in $\om{r}{n}$ is incremental, and we always assume that all uniform $k$-neighborly oriented matroids in $\om{r}{n-1}$ are already available. The enumeration follows three steps:
\begin{description}
 \item[Step 1] For each uniform $k$-neighborly oriented matroid $\cal M$ in $\om{r}{n-1}$, we use a SAT solver to list functions 
 $\sigma: {\cal C}^*({\cal M}) \rightarrow \{ +,-\}$ that form a superset of all localizations of single element extensions of $\cal M$ 
 that are neighborly and uniform.
               In this step, it is important to reduce the number of candidates, i.e., to strengthen the SAT constraints.
 \item[Step 2] We compute which of the functions provide single element extensions.
 \item[Step 3] We compute one representative from each relabeling class.
\end{description}

\subsection*{Step 1. Enumerating candidates}

Let $r,n \in \mathbb{N}$ with $n \geq r+2$, and 
${\widehat {\cal M}}$ a rank $r$ uniform $k$-neighborly oriented matroid on $[n]$. 
Then ${\cal M}:={\widehat {\cal M}}|_{[n-1]}$ is a rank $r$ uniform $k$-neighborly oriented matroid on $[n-1]$.

Let $\sigma: {\cal C}^*({\cal M}) \rightarrow \{ +,-\}$ be the localization of the single element extension from $\cal M$ to $\widehat{\cal M}$. 
The faces of $\widehat{\cal M}$ are completely determined by $\sigma|_{P({\cal M})}$ and the faces of $\cal M$ by what is known as the 
\emph{beneath-beyond} method (see~\cite{G03,M81} and \cite[Proposition~9.2.2]{OM}). Namely, $Z$ is a rank $m$ non-negative covector of 
${\widehat {\cal M}}$ if and only if
\[
\begin{cases}
Z=(X, +) & \text{ for some $X \in P_m({\cal M})$ such that $\sigma (X) = +$}\\
 \text{ or}\\
Z=(Y, 0) & \text{ for some $Y \in P_{m-1}({\cal M})$ such that} \\
       & \text{ there exist $V,W \in P({\cal M})$ such that $V,W \preceq Y$ and $\sigma (V)=+, \sigma (W)=-$.}
\end{cases}
\]

The following lemma is straightforward from this characterization of the faces of $\widehat{\cal M}$.
\begin{lem}\label{lem:SATcondition:neighborly}
Let $\widehat{\cal M}$ be a $k$-neighborly uniform oriented matroid that is a single element extension of $\cal M$ with localization $\sigma$. Then, 
for all $m=1,\dots,k$ and $(i_1,\dots,i_m) \in \Lambda (n,m)$,
\[
\begin{cases}
\text{ $\sigma (C_1) = +$ or $\dots$ or $\sigma (C_{p})=+$} \\
\text{ or }  \\
\text{ ($\sigma (D_1) = +$  or $\dots$ or $\sigma (D_{q})=+$)} 
\text{ and } 
\text{ ($\sigma (D_1) = -$ or $\dots$ or $\sigma (D_{q})=-$),}
\end{cases}
\]
where $C_1,\dots,C_p$ are the non-negative cocircuits of ${\cal M}$ with $Z(C_j) \supseteq \{ i_1,\dots,i_m \}$ 
for $j=1,\dots,p$
and
$D_1,\dots,D_q$ are the non-negative cocircuits of ${\cal M}$ with $|Z(D_j) \cap \{ i_1,\dots, i_m \}| = m-1$
for $j=1,\dots,q$. (Notice that $\sigma(C)\neq0$ for every cocircuit $C$ because we only consider uniform oriented matroids.)
\end{lem}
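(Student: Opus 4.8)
The plan is to derive the stated SAT clause directly from the beneath--beyond characterization of the non-negative covectors of $\widehat{\cal M}$ that precedes the lemma. Since $\widehat{\cal M}$ is $k$-neighborly, for every $m\in\{1,\dots,k\}$ and every $(i_1,\dots,i_m)\in\Lambda(n,m)$ the set $\{i_1,\dots,i_m\}$ must be a face of $\widehat{\cal M}$, i.e.\ $\widehat{\cal M}$ must contain a non-negative covector $Z$ of rank $m$ with $Z^0=\{i_1,\dots,i_m\}$ (here I use that $\widehat{\cal M}$ is uniform, so a rank-$m$ non-negative covector with these $m$ coordinates zero has exactly that zero set). Applying the displayed dichotomy to such a $Z$ gives precisely two cases, and my job is to translate each into the cocircuit conditions on $\cal M$ claimed in the lemma.

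First I would treat the case $Z=(X,+)$ with $X\in P_m({\cal M})$ and $\sigma(X)=+$. Since $X$ has rank $m$ and $X^0\supseteq\{i_1,\dots,i_m\}$ (again uniformity forces equality), $X$ is a non-negative covector of $\cal M$ witnessing that $\{i_1,\dots,i_m\}$ is a face of $\cal M$; write $X=C_{j_1}\circ\cdots\circ C_{j_t}$ as a composition of non-negative cocircuits of $\cal M$, each of which has zero set containing $X^0\supseteq\{i_1,\dots,i_m\}$ — so each of them is among $C_1,\dots,C_p$. Then I need: $\sigma(X)=+$ forces $\sigma(C_{j_\ell})=+$ for at least one $\ell$. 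This is where I would invoke the definition of a localization as a single-element extension: $\sigma$ together with the sign data of $\cal M$ must itself satisfy the cocircuit/covector composition rules of $\widehat{\cal M}$, so $\sigma\big((X,+)\big)$ is the composition $(C_{j_1},\sigma(C_{j_1}))\circ\cdots$ evaluated on the new coordinate; for this composition to equal $+$ in the new coordinate, the first $C_{j_\ell}$ with $\sigma(C_{j_\ell})\neq 0$ must have $\sigma(C_{j_\ell})=+$, and in the uniform setting all $\sigma(C_{j_\ell})\neq 0$, giving $\sigma(C_{j_\ell})=+$ for that (indeed the first) index. Hence at least one $C_j$ has $\sigma(C_j)=+$, which is the first disjunct of the clause.

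Second I would treat the case $Z=(Y,0)$ with $Y\in P_{m-1}({\cal M})$ and there exist $V,W\in P({\cal M})$, $V,W\preceq Y$, with $\sigma(V)=+$ and $\sigma(W)=-$. Here $Y$ has rank $m-1$ and zero set containing $\{i_1,\dots,i_m\}$ minus the single coordinate that is nonzero in $Y$ but zero in $Z$; more carefully, $Z^0=Y^0\cup\{\text{new element}\}$ has size $m$, so $|Y^0|=m-1$ and $Y^0\subseteq\{i_1,\dots,i_m\}$ with $|Y^0|=m-1$, i.e.\ $|Y^0\cap\{i_1,\dots,i_m\}|=m-1$. For any non-negative cocircuit $C\preceq Y$ we have $Z(C)\supseteq Y^0$ and, by uniformity, $|Z(C)|=\rank(\cal M)-1$; I need $|Z(C)\cap\{i_1,\dots,i_m\}|=m-1$ so that $C$ is among $D_1,\dots,D_q$. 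Since $Y^0\subseteq Z(C)$ and $Y^0$ already meets $\{i_1,\dots,i_m\}$ in $m-1$ elements, it remains to rule out that $Z(C)$ contains all of $\{i_1,\dots,i_m\}$; but if it did, $C$ would be a non-negative covector (a cocircuit) of $\cal M$ with zero set containing $\{i_1,\dots,i_m\}$, i.e.\ $C$ would be one of $C_1,\dots,C_p$ — this does not contradict anything by itself, so I need to be slightly more careful and argue instead at the level of the clause: if some $C\preceq Y$ with $\sigma(C)\neq 0$ lies in $\{C_1,\dots,C_p\}$ and equals $+$, we are in the first disjunct and done, and otherwise every relevant cocircuit below $Y$ lies in $\{D_1,\dots,D_q\}$, and among those we have found $V$ with $\sigma(V)=+$ and $W$ with $\sigma(W)=-$, giving both halves of the conjunction in the second disjunct.

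The main obstacle I anticipate is the bookkeeping in the second case: making precise, using only uniformity and the composition-into-cocircuits structure of non-negative covectors (stated in the excerpt), exactly which non-negative cocircuits of $\cal M$ sit below the relevant covectors and confirming they fall into the $D_j$ family rather than the $C_j$ family — handled cleanly by noting that if one of them is a $C_j$ with $\sigma=+$ we land in the first disjunct anyway, so the clause holds in either sub-case. Everything else is a direct unwinding of the beneath--beyond description together with the fact that $\{i_1,\dots,i_m\}$ being a face of the $k$-neighborly $\widehat{\cal M}$ forces one of the two displayed covector types to occur.
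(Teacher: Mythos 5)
Your proof follows the same route the paper intends: the paper declares the lemma ``straightforward from this characterization of the faces of $\widehat{\cal M}$'' and gives no further details, and you supply exactly that unwinding of the beneath--beyond dichotomy. Case~1 is fine (in the uniform case every cocircuit factor below $X$ has $\sigma\neq 0$, so the one controlling the $n$th coordinate of $(X,+)$ must have $\sigma=+$, and being $\preceq X$ it is non-negative with zero set containing $\{i_1,\dots,i_m\}$, hence one of the $C_j$'s). The off-by-one in your use of ``rank'' is harmless --- with the paper's convention $|Z^0|=\rank(Z)-1$, the covector $Z$ witnessing the face $\{i_1,\dots,i_m\}$ has rank $m+1$, not $m$ --- and does not affect the zero-set bookkeeping.

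There is, however, a real wrinkle in your treatment of Case~2. You correctly note that $Z^0 = Y^0 \cup\{n\}$, so $n\in\{i_1,\dots,i_m\}$. But you do not draw the key consequence: since every cocircuit of ${\cal M}$ has zero set inside $[n-1]$, no cocircuit of ${\cal M}$ can satisfy $Z(C)\supseteq\{i_1,\dots,i_m\}$ when $n\in\{i_1,\dots,i_m\}$, i.e.\ $p=0$. Your workaround (``if one of them is a $C_j$ with $\sigma=+$ we land in the first disjunct'') only handles the cocircuit $V$ with $\sigma(V)=+$; it does not rule out the symmetric possibility that $W$ (with $\sigma(W)=-$) lies in $\{C_1,\dots,C_p\}$, in which case $W\notin\{D_1,\dots,D_q\}$ and the second half of the conjunction would not be witnessed. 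The fix is to observe $p=0$ directly, so both $V$ and $W$ are forced into $\{D_1,\dots,D_q\}$ and the second disjunct follows outright; your detour is then unnecessary. With this repair the argument is complete and matches the paper's intent.
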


Consider the simplicial complexes $P^+$ and $P^-$ generated respectively by $\{ Z(C)\ \mid\ C \in P({\cal M}),\ \sigma(C)=+\}$ and by $\{ Z(D)\ \mid\ D \in P({\cal M}),\ \sigma(D)=-\}$. Observe that $P^+$ is the antistar of $n+1$ in $P({\widehat {\cal M}})$. Analogously, $P^-$ is the antistar of $n+1$ in the face lattice of the extension of ${\cal M}$ with localization $-\sigma$. 

$P({\widehat {\cal M}})$ is always an $(r-2)$-dimensional PL-sphere (see \cite[Proposition 9.1.1]{OM}), the star of $n+1$ in $P({\widehat {\cal M}})$ is then a PL-ball (see \cite[Theorem 4.7.21]{OM}) and thus $P^\pm$ are PL-balls (see \cite[Corollary 3.13]{PL}). (These are actually lifting triangulations of oriented matroids, see \cite{S2002}.) 

In particular, $P^\pm$ is connected and, either it consists of a single simplex or for every simplex $S_1\in P^\pm$ there is a simplex $S_2\in P^\pm$ such that $S_1\cap S_2$ is a common facet of both $S_1$ and $S_2$. This implies the following lemma.

\begin{lem}\label{lem:SATcondition:connected}
 With the notation from above,  if $|\{ C \mid C \in P({\cal M}), \sigma(C)=+\} | > 1$ (it is required if $r \geq 5$) 
 and $\sigma(X)=+$ for $X \in P({\cal M})$,
then
\[ \sigma (C_1) = + \text{ or } \dots \text{ or } \sigma (C_{p}) = +, \]
where $C_1,\dots,C_{p}$ are the non-negative cocircuits of ${\cal M}$ such that $|Z(X) \cap Z(C_j)| = r-2$ for $j=1,\dots,p$.
The same holds for the simplicial complex generated by $\{ Z(D) \mid D \in P({\cal M}), \sigma(D)=-\}$.
\end{lem}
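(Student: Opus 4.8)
The plan is to translate the geometric statement ``$P^+$ is connected and strongly connected as a pseudomanifold'' into the desired disjunctive condition on the localization $\sigma$. Recall from the discussion preceding the lemma that $P^+$ is a PL-ball of dimension $r-2$, hence in particular a pure $(r-2)$-dimensional simplicial complex that is strongly connected: any two facets are joined by a chain of facets in which consecutive facets share a ridge (a face of dimension $r-3$). The facets of $P^+$ are exactly the sets $Z(C)$ for $C \in P(\cal M)$ with $\sigma(C)=+$, since $\cal M$ is uniform (so $|Z(C)|=r-1$ for every cocircuit $C$, making each such $Z(C)$ an $(r-2)$-simplex).

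First I would fix $X \in P(\cal M)$ with $\sigma(X)=+$, so that $Z(X)$ is a facet of $P^+$. Under the hypothesis $|\{C \in P(\cal M) : \sigma(C)=+\}| > 1$, there is at least one other facet $Z(X')$ in $P^+$. By strong connectedness of $P^+$, there is a chain of facets from $Z(X)$ to $Z(X')$; let $Z(C)$ be the facet immediately following $Z(X)$ in this chain. Then $Z(X)$ and $Z(C)$ share a ridge, i.e.\ $|Z(X) \cap Z(C)| = r-2$, and $C \in P(\cal M)$ with $\sigma(C)=+$ and $C \neq \pm X$. This $C$ is one of the $C_1,\dots,C_p$ listed in the statement, so the disjunction $\sigma(C_1)=+ \text{ or } \dots \text{ or } \sigma(C_p)=+$ holds. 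The parenthetical remark ``it is required if $r \geq 5$'' is just the observation that for $r\geq 5$ a single-element extension of a neighborly (hence $\lfloor(r-1)/2\rfloor \geq 2$-neighborly) oriented matroid cannot have only one positive cocircuit, so the hypothesis is automatic; I would either cite this or note it follows because the antistar $P^+$ of a vertex in the $(r-2)$-sphere $P(\widehat{\cal M})$ cannot be a single simplex when $r\geq 5$ (a single simplex would force $P(\widehat{\cal M})$ to be the boundary of a simplex, i.e.\ $\widehat{\cal M}$ to be a simplex, contradicting $n \geq r+2$). The statement for the $P^-$ complex is identical after replacing $\sigma$ by $-\sigma$.

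The main obstacle, and the only point requiring care, is making sure the notion of ``ridge-adjacency of facets in $P^+$'' matches the combinatorial condition ``$|Z(X)\cap Z(C)| = r-2$'' stated in the lemma, and that the facets of $P^+$ really are in bijection with positive cocircuits. Both reduce to uniformity: every cocircuit $C$ has $|Z(C)|=r-1$, so the generators $Z(C)$ of $P^+$ are all of the same dimension $r-2$ and none is contained in another; hence they are precisely the facets of $P^+$, and two of them form a ridge-adjacent pair exactly when their intersection has size $r-2$. Once this identification is in place, the lemma is an immediate consequence of the strong connectedness of the PL-ball $P^+$ established in the paragraph before the lemma statement.
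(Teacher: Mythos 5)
Your proof is correct and takes essentially the same route as the paper: the paper does not spell out the argument, but the sentence immediately preceding the lemma records exactly the facts you use — that $P^\pm$ is a PL-ball, hence either a single simplex or such that every facet has a ridge-adjacent facet — and states that the lemma follows. You invoke the (marginally stronger) strong connectedness of a PL-ball rather than just local adjacency, and you supply the helpful bookkeeping that, by uniformity, the facets of $P^+$ are precisely the sets $Z(C)$ with $\sigma(C)=+$ and that ridge-adjacency translates to $|Z(X)\cap Z(C)|=r-2$; these are exactly the identifications the paper tacitly assumes, so your write-up is a faithful (and more explicit) rendering of the intended argument.
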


Moreover, the closed star of every face $F$ of $P^\pm$, $\st_F(P^\pm)$ is a PL-ball. 
Again, either $\st_F(P^\pm)$ consists of a single simplex or for every simplex $S_1\in \st_F(P^\pm)$ there is a simplex $S_2\in \st_F(P^\pm)$ such that $S_1\cap S_2$ is a common facet of both $S_1$ and $S_2$.
We obtain the following lemma as a consequence (compare with the discussion in \cite{BS87}).

\begin{lem}\label{lem:SATcondition:linksconnected}
With the notation from above, for all $X,Y \in P({\cal M})$ such that $|Z(X) \cap Z(Y)| < r-2$, 
\begin{equation*}\label{eq:shelling} \sigma(X)=\sigma(Y)=+  \Rightarrow (\sigma(C_{1})=+ \text{ or } \dots \text{ or } \sigma(C_{p})=+),\end{equation*}
where 
$C_{1},\dots,C_{p}$ are the cocircuits of ${\cal M}$ such that
$|Z(C_{j}) \cap Z(X)| = r-2$ and $Z(C_{j}) \supseteq Z(X) \cap Z(Y)$ for $j=1,\dots,p$.
Note that the same holds for the complex generated by $\{ Z(D) \mid D \in P({\cal M}), \sigma(D)=-\}$.
\end{lem}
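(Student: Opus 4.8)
\textbf{Proof plan for Lemma~\ref{lem:SATcondition:linksconnected}.}
The plan is to extract this statement from the combinatorial-topological facts already established in the two preceding paragraphs, exactly as Lemmas~\ref{lem:SATcondition:neighborly} and~\ref{lem:SATcondition:connected} were extracted. Fix $X,Y \in P({\cal M})$ with $\sigma(X)=\sigma(Y)=+$ and $|Z(X)\cap Z(Y)|<r-2$, and set $F := Z(X)\cap Z(Y)$, viewed as a face of the simplicial complex $P^+$ (it is a face because both $Z(X)$ and $Z(Y)$ are faces of $P^+$, being zero-sets of non-negative cocircuits $C$ with $\sigma(C)=+$, and $P^+$ is a simplicial complex so it is closed under intersections of faces). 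The facets of $P^+$ are precisely the sets $Z(C)$ for $C\in P({\cal M})$ with $\sigma(C)=+$, since $P({\widehat{\cal M}})$ is a uniform matroid polytope's face-sphere and in the uniform case each such $Z(C)$ has size $r-2$, i.e. is a facet of the $(r-2)$-dimensional ball $P^+$; so the cocircuits $C_1,\dots,C_p$ named in the statement (those with $|Z(C_j)\cap Z(X)| = r-2$ and $Z(C_j)\supseteq F$) are exactly the facets of the closed star $\st_F(P^+)$ that are distinct from $Z(X)$, together with $Z(X)$ itself if $Z(X)$ is among them — but in fact, by the convention $|Z(C_j)\cap Z(X)|=r-2=|Z(X)|$ we get $Z(C_j)=Z(X)$ is allowed, so $Z(X)$ itself counts among the $C_j$ and the disjunction is trivially satisfiable; more to the point, the intended content is that $Z(Y)$ reaches $Z(X)$ through a sequence of such facets.

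First I would invoke the fact recorded in the paragraph immediately before the lemma: for every face $F$ of $P^\pm$ the closed star $\st_F(P^\pm)$ is a PL-ball, hence connected and pure of dimension $r-2$, and either it is a single simplex or every facet of it shares a common ridge with another facet of it. Applying this to our $F=Z(X)\cap Z(Y)$: both $Z(X)$ and $Z(Y)$ are facets of $\st_F(P^+)$ (each contains $F$ and is a facet of $P^+$). By purity and the ridge-connectivity of a PL-ball, $\st_F(P^+)$ is \emph{strongly connected}: there is a sequence of facets $Z(X)=S_0,S_1,\dots,S_\ell=Z(Y)$ with each $S_{i-1}\cap S_i$ a common ridge. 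In particular $S_1$ is a facet of $\st_F(P^+)$ sharing a ridge with $Z(X)$, so $|S_1\cap Z(X)| = r-2 - 1 +\dots$; wait — a ridge of an $(r-2)$-ball has $r-2$ vertices? No: the facets of $P^+$ have $r-2$ vertices (dimension $r-3$ as simplices? ) — here I should be careful: $P^+$ is $(r-2)$-\emph{dimensional} as a PL-ball, so its facets are $(r-2)$-simplices with $r-1$ vertices, i.e. $|Z(C)| = r-1$ for $C\in P({\cal M})$ with $\sigma(C)=+$? That contradicts $|Z(C)|=r-2$ for cocircuits of a uniform rank-$r$ oriented matroid. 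I would resolve this bookkeeping by noting $P({\widehat{\cal M}})$ is an $(r-2)$-\emph{sphere} whose faces are labelled by zero-sets, and a \emph{facet} of that sphere (a maximal proper face, dimension $r-3$) is a set of size $r-2$; thus $P^+$ is an $(r-3)$-dimensional simplicial ball, its facets are the $Z(C)$ of size $r-2$, and its ridges are sets of size $r-3$. So ``$|Z(C_j)\cap Z(X)|=r-2$'' says $Z(C_j)=Z(X)$ or $Z(C_j)$ is a facet of $\st_F$ adjacent to $Z(X)$ along the ridge $Z(C_j)\cap Z(X)$ of size $r-3$, and the condition $Z(C_j)\supseteq F$ places it in $\st_F(P^+)$.

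With the indexing straightened out, the conclusion is immediate: if $\st_F(P^+)$ consists of the single simplex $Z(X)$, then $Z(Y)=Z(X)$, contradicting $|Z(X)\cap Z(Y)|<r-2$; hence it has at least two facets, so $Z(X)$ shares a ridge $\supseteq F$ with some other facet $Z(C_j)$, and that $C_j$ is one of $C_1,\dots,C_p$ with $\sigma(C_j)=+$ (it is in $P^+$, so by definition of $P^+$ its localization value is $+$) — the disjunction holds. The statement about the complex generated by the $Z(D)$ with $\sigma(D)=-$ follows by the same argument applied to $P^-$, which the excerpt already noted is also a PL-ball (being the antistar of the new element in the face lattice of the extension with localization $-\sigma$). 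The main obstacle I anticipate is purely notational: getting the dimension/size bookkeeping ($r-2$ versus $r-3$ vertices, facets versus ridges, the role of the degenerate case $Z(C_j)=Z(X)$) to line up with the literal inequalities ``$=r-2$'' in the statement, and making precise the passage from ``PL-ball'' to ``pure and strongly (ridge-)connected,'' which is a standard but not entirely trivial fact about PL-balls that I would cite from~\cite{PL} rather than reprove.
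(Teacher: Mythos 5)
Your high-level approach is the paper's: the statement is read off from the fact, recorded in the paragraph immediately before it, that $\st_F(P^{\pm})$ is a PL-ball, hence (unless it is a single simplex) each of its facets shares a ridge with another facet; since $Z(X)\neq Z(Y)$ when $|Z(X)\cap Z(Y)|<r-2$, the star $\st_F(P^+)$ with $F=Z(X)\cap Z(Y)$ has at least two facets, so $Z(X)$ has a ridge-neighbor $Z(C_j)$ in it, and $\sigma(C_j)=+$ because $Z(C_j)$ is a facet of $P^+$. That is exactly the paper's argument.

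However, your ``bookkeeping resolution'' in the middle of the writeup is wrong, and it matters that you get it right. In a uniform rank-$r$ oriented matroid a cocircuit $C$ has $|Z(C)|=r-1$, \emph{not} $r-2$: by the paper's own definition of uniformity, $|C^0|=\rank_{\cal M}(C)-1$, and a cocircuit has rank $r$ in the sense used there (equivalently, its zero set is a matroid hyperplane, which in the uniform case has $r-1$ elements). So there is no contradiction to resolve: $P({\widehat{\cal M}})$ is $(r-2)$-dimensional, its facets are $(r-2)$-simplices with $r-1$ vertices, $P^+$ is an $(r-2)$-dimensional PL-ball whose facets are the sets $Z(C)$ of size $r-1$ with $\sigma(C)=+$, and ridges have $r-2$ vertices. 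Your first instinct was correct; the ``resolution'' that downgrades $P^+$ to an $(r-3)$-ball is not, and it propagates: the claim that $|Z(C_j)\cap Z(X)|=r-2=|Z(X)|$ permits $Z(C_j)=Z(X)$, so that ``the disjunction is trivially satisfiable,'' is false. With the correct sizes, $|Z(X)|=r-1>r-2$, so $X$ itself is \emph{not} among the $C_j$, the hypothesis $|Z(X)\cap Z(Y)|<r-2$ forces $Z(X)\neq Z(Y)$, and the lemma's conclusion is genuinely non-trivial. Once you replace $|Z(C)|=r-2$ by $|Z(C)|=r-1$ and keep $P^+$ as $(r-2)$-dimensional throughout, your argument goes through cleanly and coincides with the paper's.
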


For a fixed oriented matroid~$\cal M$, the enumeration of all the assignments $\sigma|_{P({\cal M})}: P({\cal M}) \rightarrow \{ +,-\}$ 
satisfying the above conditions (from Lemmas~\ref{lem:SATcondition:neighborly}, \ref{lem:SATcondition:connected} and 
\ref{lem:SATcondition:linksconnected}) is nothing but a SAT enumeration problem. Such a SAT enumeration problem can be solved by relsat~\cite{relsat} 
for example.

As a summary, in the first step of our enumeration, we solve the following problem: 
For each uniform $k$-neighborly matroid ${\cal M} \in \om{r}{n-1}$, 
enumerate all assignments $\phi : P({\cal M}) \rightarrow \{ \textsf{true}, \textsf{false}\}$ satisfying

\begin{itemize}
 \item  For all $m=1,\dots,k$ and $(i_1,\dots,i_m) \in \Lambda (n,m)$, if $C_1,\dots,C_p$ are the non-negative cocircuits of $P({\cal M})$ with $Z(C_j)\supseteq \{ i_1,\dots,i_m\}$ and 
$D_1,\dots,D_q$ are the non-negative cocircuits of  $P({\cal M})$ 
with $|Z(D_j) \cap \{i_1,\dots, i_m \}|= m-1$, then:
\begin{itemize}
 \item[$\blacktriangleright$]  $(\phi (C_1) \lor \dots \lor \phi (C_p))  \lor ((\phi (D_1) \lor \dots \lor \phi (D_q)) \land (\lnot \phi (D_1) \lor \dots \lor \lnot \phi (D_q))).$
 \begin{flushright} (Lemma \ref{lem:SATcondition:neighborly})\end{flushright}
\end{itemize}
\item For all $X \in P({\cal M})$, if $C_1,\dots,C_{p}$ are the non-negative cocircuits of ${\cal M}$ with $|Z(X) \cap Z(C_i)| = r-2$, then:
 
\begin{itemize}
 \item[$\blacktriangleright$] $\lnot \phi (X) \lor   (\phi (C_1) \lor \dots \lor \phi (C_{p}))$, \quad and
 \item[$\blacktriangleright$] $\phi (X) \lor   (\lnot \phi (C_1) \lor \dots \lor \lnot \phi (C_{p})).$  \begin{flushright}(Lemma \ref{lem:SATcondition:connected})\end{flushright}
\end{itemize}
\item For all $X,Y \in P({\cal M})$ such that $|Z(X) \cap Z(Y)| < r-2$, if $C_1,\dots,C_p$ are the cocircuits of ${\cal M}$ such that $|Z(C_i) \cap Z(X)| = r-2$ and $Z(C_{i}) \supseteq Z(X) \cap Z(Y)$, then:

\begin{itemize}
 \item[$\blacktriangleright$] $\lnot (\phi (X) \land \phi (Y) ) \lor (\phi (C_{1}) \lor \dots \lor \phi (C_{p})))$, \quad and
 \item[$\blacktriangleright$] $\lnot (\lnot \phi (X) \land \lnot \phi (Y) ) \lor (\lnot \phi (C_{1}) \lor \dots \lor \lnot \phi (C_{p}))).$
 \begin{flushright}(Lemma \ref{lem:SATcondition:linksconnected})\end{flushright}
\end{itemize}
\end{itemize}

\subsection*{Step 2. Computing compatible oriented matroids}
The next step is to enumerate all compatible localizations $\sigma$ with each $\sigma|_{P({\cal M})}$ determined by the SAT solutions
(or to compute one compatible localization 
if the resulting oriented matroid is known to be rigid in advance or 
if one wants to enumerate only face lattices of neighborly oriented matroids).
This can be done by checking whether compatible cocircuit signatures can be defined consistently along the \emph{colines} 
(i.e., rank $2$ contractions) of ${\cal M}$, based on Las Vergnas' characterization~\cite{LV78}. 
 We use a slightly modified version of the algorithm LocalizationsPatternBacktrack by Finschi and Fukuda~\cite{FF02} (starting with a specified $\sigma|_{P({\cal M})}$).
A similar algorithm to LocalizationsPatternBacktrack for uniform oriented matroids is also proposed by Bokowski and Guedes de Oliveira~\cite{BG00}.

The following is a characterization of localizations, due to Las Vergnas. 
It plays a fundamental role in our algorithm (and in LocalizationPatternBacktrack).
\begin{thm}~\mbox{}{\rm ({\cite[Theorem~7.1.8]{OM}\cite{LV78}})}\\
Let ${\cal M}$ be an oriented matroid and $\sigma :{\cal C}^* \rightarrow \{ +,-,0\}$
a cocircuit signature, satisfying $\sigma (-Y) = -\sigma(Y)$ for all $Y \in {\cal C}^*$.
Then the following statements are equivalent.
\begin{itemize}
\item[(1)] $\sigma$ is a localization: there exists a single element extension $\widehat{\cal M}$ of ${\cal M}$ such that
\[ \{ (Y,\sigma (Y)) \mid Y \in {\cal C}^* \} \subseteq \widehat{\cal C}^*,\]
where $\widehat{\cal C}^*$ is the set of cocircuits of $\widehat{\cal M}$.
\item[(2)] $\sigma$ defines a single element extension on every contraction of ${\cal M}$ of rank $2$.
\end{itemize}
\end{thm}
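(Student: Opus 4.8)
The plan is to prove the two implications separately; $(1)\Rightarrow(2)$ is a routine consequence of how extensions behave under contraction, while $(2)\Rightarrow(1)$ carries all the weight.

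\emph{$(1)\Rightarrow(2)$.} Let $\widehat{\cal M}$ be a single element extension of $\cal M$ with new element $p$ such that $(Y,\sigma(Y))\in\widehat{\cal C}^*$ for every $Y\in{\cal C}^*$. For any flat $A\subseteq E$ with $\rank({\cal M}/A)=2$, the contraction $\widehat{\cal M}/A$ is an oriented matroid of rank~$2$ with one element more than ${\cal M}/A$, hence a single element extension of ${\cal M}/A$. I would check that the cocircuits of ${\cal M}/A$ are exactly the restrictions to $E\setminus A$ of the cocircuits $Y$ of $\cal M$ with $Z(Y)\supseteq A$, that this bijection (on $\pm$-pairs) respects the new element, and that under it the localization of $\widehat{\cal M}/A$ over ${\cal M}/A$ is $\sigma$ restricted to those cocircuits; the hypothesis $\sigma(-Y)=-\sigma(Y)$ is precisely what makes this restriction a well-defined cocircuit signature on ${\cal M}/A$. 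Hence $\sigma$ defines a single element extension on every rank-$2$ contraction of $\cal M$.

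\emph{$(2)\Rightarrow(1)$.} The strategy is to reconstruct $\widehat{\cal M}$ on $E\cup\{p\}$ directly from $\sigma$ and then verify the cocircuit axioms. I would take
\[
\widehat{\cal C}^*\;:=\;\bigl\{(Y,\sigma(Y))\,\big|\,Y\in{\cal C}^*({\cal M})\bigr\}\ \cup\ \bigl\{(Z,0)\,\big|\,Z\in{\cal Z}\bigr\},
\]
where ${\cal Z}$ collects the covectors $Z$ of $\cal M$ with $Z(Z)$ a flat of rank $r-2$ (equivalently, $Z$ lying on a coline of $\cal M$) for which the rank-$2$ single element extension of ${\cal M}/Z(Z)$ prescribed by $\sigma$ --- which exists by hypothesis~(2) --- places the new element on the line of $Z$ rather than strictly to one side of it. The symmetry axiom $\widehat{\cal C}^*=-\widehat{\cal C}^*$ is immediate from $\sigma(-Y)=-\sigma(Y)$ and ${\cal Z}=-{\cal Z}$. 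One then argues that the supports occurring in $\widehat{\cal C}^*$ are the cocircuits of a single (unoriented) matroid that is a single element extension of the underlying matroid of $\cal M$: the position of $p$ relative to every flat is read off from $\sigma$, and hypothesis~(2) ensures that these local choices glue to a consistent modular-cut datum, because colines sharing a lower flat of $\cal M$ impose compatible constraints.

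\emph{The main obstacle.} The heart of the matter is the cocircuit elimination axiom for $\widehat{\cal C}^*$: given $W_1,W_2\in\widehat{\cal C}^*$ with $W_1\neq-W_2$ and $f\in W_1^+\cap W_2^-$, one must exhibit an eliminating cocircuit in $\widehat{\cal C}^*$. When $W_1,W_2$ both come from cocircuits $Y_1,Y_2$ of $\cal M$ and $f\in E$, I would localise to the coline through the flat $Z(Y_1)\cap Z(Y_2)$, which has rank $r-2$ (two distinct hyperplanes of a matroid meet in a rank-$(r-2)$ flat, by semimodularity), and perform the elimination inside the rank-$2$ contraction ${\cal M}/(Z(Y_1)\cap Z(Y_2))$, where hypothesis~(2) supplies the required cocircuit because $\widehat{\cal M}/(Z(Y_1)\cap Z(Y_2))$ is a genuine rank-$2$ oriented matroid whose cocircuit set is exactly the relevant restriction of $\widehat{\cal C}^*$. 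The cases I expect to be genuinely delicate are those not immediately confined to a single coline --- in particular eliminating at $f=p$, which must produce a ``new'' cocircuit $(Z,0)$, and eliminating an old cocircuit against a new one --- since there the pair $W_1,W_2$ need not lie on a common coline of $\cal M$; handling these, and then patching all the local eliminations (together with the routine minimality and support checks) into the single statement that $\widehat{\cal C}^*$ is a cocircuit system, is where the real work lies. Once this is done, the resulting oriented matroid $\widehat{\cal M}$ is by construction a single element extension of $\cal M$ with $(Y,\sigma(Y))\in\widehat{\cal C}^*$ for all $Y$, which is statement~(1); and the necessity of quantifying over \emph{all} rank-$2$ contractions in~(2) is exactly the reflection of this reduction of elimination to the colines.
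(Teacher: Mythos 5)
The paper does not prove this theorem; it is quoted verbatim as {\cite[Theorem~7.1.8]{OM}} (Las Vergnas~\cite{LV78}) and used as a black box in the description of Step~2 of the algorithm, so there is no proof in the paper to compare against. On the merits of your own sketch: $(1)\Rightarrow(2)$ is fine, and the general plan for $(2)\Rightarrow(1)$ --- write down $\widehat{\cal C}^*$ explicitly and verify the cocircuit axioms by localizing elimination to colines --- is the right one, but it contains a genuine error in the very step you present as the easy case. You assert that for distinct cocircuits $Y_1,Y_2$ of $\cal M$ the flat $Z(Y_1)\cap Z(Y_2)$ has rank exactly $r-2$ ``by semimodularity.'' Semimodularity gives only $\rank\bigl(Z(Y_1)\cap Z(Y_2)\bigr)\leq r-2$; equality is the statement that $(Y_1,Y_2)$ is a \emph{modular pair}, and most pairs of cocircuits are not modular. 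Already in a uniform rank-$3$ oriented matroid on at least five elements one finds cocircuits $Y_1,Y_2$ that are separated at some $f$ yet have disjoint zero sets, so $Z(Y_1)\cap Z(Y_2)=\emptyset$ has rank $0$, not $1$. For such a pair the contraction ${\cal M}/\bigl(Z(Y_1)\cap Z(Y_2)\bigr)$ has rank greater than $2$, hypothesis~$(2)$ gives you nothing directly, and the ``old-old elimination at $f\in E$'' is not in fact confined to a coline.

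This is not a cosmetic slip: the reduction of global elimination to coline-local data is the entire content of the theorem, and it works directly only for modular pairs. A correct proof must either invoke (and justify) the fact that in an oriented matroid it suffices to verify cocircuit elimination on modular pairs, or chain together coline-local eliminations along a path of modular steps from $Y_1$ to $Y_2$, or --- as in \cite{OM} --- argue at the covector level using the big face lattice. Any of these would repair the sketch, but as written you dispose of the old--old case with a false lemma, deferring the difficulty only to the new-cocircuit cases, when in fact the same difficulty is already present (and is really the heart of the theorem) in the case you treated as routine.
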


An overview of our algorithm is as follows.
For each SAT solution, the algorithm first determines the corresponding values of $\sigma|_{P({\cal M})}$, based on the SAT solution.
Then, for each coline of $\cal M$ we consider the cocircuits $C_1,\dots,C_{2k}$ ordered along it.
By Las Vergnas' characterization, there exists an index $m \in [2k]$ such that $\sigma(C_{m \bmod 2k}) = \dots = \sigma(C_{m+k-1 \bmod 2k}) = +$ and $\sigma(C_{m+k \bmod 2k}) = \dots = \sigma(C_{m+2k-1 \bmod 2k}) = -$. The cocircuit signature $\sigma$ is a localization of an oriented matroids if and only if the same holds for all colines.
All possible sign patterns that fulfill this condition are enumerated by backtracking.
That is, we first choose an index $m \in [2k]$ and substitute $\sigma(C_{m \bmod 2k}) = \dots = \sigma(C_{m+k-1 \bmod 2k}) = +$ and $\sigma(C_{m+k \bmod 2k}) = \dots = \sigma(C_{m+2k-1 \bmod 2k}) = -$.
If this conflicts with the values already assigned,  we abort the substitution and try other indices.
This backtracking procedure is repeated for all colines. Whenever the algorithm can continue this procedure until the last coline, then the obtained assignment $\sigma$ is a localization.
If this procedure does not complete any consistent assignment until the last coline, $\sigma|_{P({\cal M})}$ cannot extend to a localization.

\subsection*{Step 3. Computing relabeling classes}
Next, we have to classify oriented matroids up to relabeling equivalence.
It can be done efficiently by using the graph automorphism solver nauty~\cite{nauty}.
First, note that two uniform oriented matroids ${\cal M}$ and ${\cal N}$ are reorientation equivalent
if and only if their \emph{cocircuit graphs} are isomorphic~\cite{BFF01}.
Given an oriented matroid~${\cal M}$,
the cocircuit graph of ${\cal M}$ is the graph $CG({\cal M}) = (V_{CG}({\cal M}),E_{CG}({\cal M}))$ 
where $V_{CG}({\cal M}) = C^*({\cal M})$ and $e = \{ X,Y \} \in E_{CG}({\cal M})$ if and only if
$S(X,Y) = \emptyset$ and $Z \not\preceq X \circ Y$ for all $Z \in C^*({\cal M}) \setminus \{ X,Y \}$.
If ${\cal M}$ is uniform, we can say more simply that $\{X,Y\}$ is an edge if there are elements $e,f\in E$ such that $X_e=0\neq Y_e$ and $X_f\neq 0= Y_f$, and $X$ and $Y$ coincide on the remaining elements.
In the following, we modify cocircuit graphs in order to check relabeling equivalence.
Let $G({\cal M})=(V_{\cal M},E_{\cal M})$ be the graph defined as follows.
\begin{align*}
V_{\cal M} &= {\cal C}^*({\cal M}) \cup \{ v_+,v_-\}.\\
E_{\cal M} &= E_{CG}({\cal M}) \cup
\{ \{ v_+,g_+\} \mid g_+ \in P({\cal M})\} \cup \{ \{ v_-,g_-\} \mid g_- \in N({\cal M})\}.
\end{align*}

\begin{prop}
Uniform oriented matroids ${\cal M}$ and ${\cal N}$ are relabeling equivalent if and only if 
$G({\cal M})$ and $G({\cal N})$ are isomorphic.
\end{prop}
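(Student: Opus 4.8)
\noindent The plan is to prove the two implications separately. The implication ``relabeling equivalent $\Rightarrow$ $G({\cal M})\cong G({\cal N})$'' is the easy one: a relabeling bijection $\phi\colon E_{\cal M}\to E_{\cal N}$ only permutes coordinates, so it preserves the purely sign-vector-theoretic conditions defining the edges of the cocircuit graph, and, being sign-preserving on cocircuits, it carries $P({\cal M})$ onto $P({\cal N})$ and $N({\cal M})$ onto $N({\cal N})$; hence $\phi$, extended by $v_+\mapsto v_+$ and $v_-\mapsto v_-$, is an isomorphism $G({\cal M})\to G({\cal N})$.

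For the converse, let $\psi\colon G({\cal M})\to G({\cal N})$ be an isomorphism. The first --- and I expect main --- step is to show that $\psi(\{v_+,v_-\})=\{v_+,v_-\}$. I would prove this by a degree bound. In a uniform oriented matroid of rank $r$ every cocircuit $X$ has at most $2(r-1)$ neighbours in the cocircuit graph: any neighbour $Y$ of $X$ satisfies $|Z(X)\cap Z(Y)|=r-2$, and inside the rank-$2$ contraction of ${\cal M}$ by the $(r-2)$-set $Z(X)\cap Z(Y)$ the cocircuit $X$ has exactly two neighbours, while $Z(X)\cap Z(Y)$ ranges over the $r-1$ subsets of $Z(X)$ of size $r-2$. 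So every cocircuit vertex of $G({\cal M})$ has degree at most $2r-1$, whereas $v_+$ and $v_-$ have degree $|P({\cal M})|=|N({\cal M})|$, the number of facets of the matroid polytope ${\cal M}$; by the lower bound theorem this exceeds $2r-1$ as soon as $r\ge 4$ and $n\ge r+2$ (and for $r\le 3$ the statement is trivial, since the only uniform acyclic oriented matroid of rank $\le3$ on $[n]$ is a polygon). Thus $v_+$ and $v_-$ are graph-theoretically recognisable, and $\psi$ restricts to an isomorphism $CG({\cal M})\to CG({\cal N})$ taking $\{P({\cal M}),N({\cal M})\}$ onto $\{P({\cal N}),N({\cal N})\}$. (In the implementation this step is avoided by giving $\{v_+,v_-\}$ its own colour class in nauty.)

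Next I would invoke the reconstruction theorem of Babson--Finschi--Fukuda~\cite{BFF01}: an isomorphism between the cocircuit graphs of uniform oriented matroids is induced by a reorientation-relabeling. Hence, after relabeling ${\cal N}$, I may assume ${\cal N}={}_{-A}{\cal M}$ for some $A\subseteq E$ and that $\psi$ acts on cocircuits by $X\mapsto{}_{-A}X$. Composing if necessary with the automorphism of $G({\cal N})$ induced by the global reorientation $X\mapsto -X$ (a cocircuit-graph automorphism by axiom (L2), which interchanges $v_+$ and $v_-$), I may assume $\psi(v_+)=v_+$, i.e.\ $\{{}_{-A}X : X\in P({\cal M})\}=P({\cal N})$. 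Since $X\mapsto{}_{-A}X$ is a bijection between cocircuit sets, this forces ${}_{-A}X\ge0$ for every non-negative cocircuit $X$ of ${\cal M}$, which (as $X\ge0$) is possible only if $A\subseteq Z(X)$; so $A$ is contained in every facet of ${\cal M}$. But the facets of a matroid polytope have empty intersection: writing the covector $X_e$ (the one with $Z(X_e)=\{e\}$, which exists because ${\cal M}$ is a matroid polytope) as a composition of cocircuits $\preceq X_e$, which are automatically non-negative, gives $\bigcap_{X\in P({\cal M})}Z(X)\subseteq Z(X_e)=\{e\}$, and intersecting over two distinct $e$ yields $\emptyset$. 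Therefore $A=\emptyset$, so ${\cal N}={\cal M}$ and the two oriented matroids are relabeling equivalent. (If the global reorientation was used, the same argument gives $A=E$, whence ${\cal N}={}_{-E}{\cal M}=-{\cal M}={\cal M}$ by (L2).)

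In short, everything reduces to three ingredients: the easy first implication, the Babson--Finschi--Fukuda reconstruction, and the observation that an element of a matroid polytope cannot lie on all of its facets. The only genuinely delicate point, and the step I would expect to cost the most effort to state cleanly, is ruling out that $\psi$ sends $v_+$ or $v_-$ to an ordinary cocircuit vertex; it is also the only place where quantitative structure of cocircuit graphs --- rather than formal sign-vector manipulation --- is needed.
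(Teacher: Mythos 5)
Your proof is correct and follows the same route as the paper's own: reduce to the Babson--Finschi--Fukuda cocircuit-graph reconstruction, observe that $\{v_+,v_-\}$ must be preserved by any isomorphism, and then argue that the resulting reorientation is trivial. What you add is precisely the content the paper's proof leaves unsaid. The paper merely asserts that $\phi(v_+)\in\{v_+,v_-\}$ and that reorientation equivalence together with $\phi(P({\cal M}))\in\{P({\cal N}),N({\cal N})\}$ ``leads to'' relabeling equivalence, while you supply a degree argument for the first claim and, for the second, the clean observation that $A\subseteq\bigcap_{X\in P({\cal M})}Z(X)=\emptyset$ (using the vertex covectors $X_e$ and the fact that non-negative cocircuits below a non-negative covector compose without cancellation). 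Both of your additions are sound, and they turn a two-line sketch into an actual proof.

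One caveat worth flagging: your degree bound (via the Lower Bound Theorem, which you need for simplicial spheres rather than just polytopes) and your final step both tacitly assume that ${\cal M},{\cal N}$ are uniform \emph{matroid polytopes} with strictly more than $2r-1$ facets. The proposition as printed says only ``uniform oriented matroids,'' and if one takes that literally --- e.g.\ for totally cyclic uniform oriented matroids where $P({\cal M})=N({\cal M})=\emptyset$ --- the argument that $A=\emptyset$ breaks down (and it is not clear the statement even remains true, since then $G({\cal M})$ is just $CG({\cal M})$ with two isolated vertices and only recovers ${\cal M}$ up to reorientation). This is an imprecision you inherit from, and share with, the paper's own proof; in the paper's actual application (neighborly uniform oriented matroids with $n\geq r+2$) the hypothesis does hold, so both proofs are fine for their intended purpose, but it would be worth stating the matroid-polytope assumption explicitly.
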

\begin{proof}
The only if part is trivial.
To prove the if part, consider two uniform oriented matroids ${\cal M}$ and ${\cal N}$ 
whose respective graphs $G({\cal M})$ and $G({\cal N})$ are isomorphic.
Let $\phi$ be an isomorphism between $G({\cal M})$ and $G({\cal N})$.
Clearly, it holds that $\phi \cdot {\cal M} = {\cal N}$ and $\phi (v_+) = v_+$ or $v_-$. 
By the result of \cite{BFF01}, ${\cal M}$ and ${\cal N}$ are reorientation equivalent.
Note that $\phi (P({\cal M})) = P({\cal M})$ or $\phi (P({\cal M})) =N({\cal M})$.
This leads to the relabeling equivalence.
\end{proof}

\section{Results}

Tables~\ref{tb:summaryneighborly} and~\ref{tb:summary2neighborly} display the number of neighborly (resp.\ $2$-neighborly) oriented matroids and the corresponding face lattices for the classes that we managed to enumerate. Table~\ref{tb:SATsolutions} complements this information with the number of SAT solutions obtained in the first step of our computation, as well as how many of these solutions admitted compatible oriented matroids.

\begin{table}[h]
\centering
\begin{footnotesize}
\begin{tabular}{|c|c|c|c|c|c|c|}
\hline
rank & \#elem. &   neigh. & \# SAT sols. & \# SAT sols. with    & \# non-isomorphic  & \# face lattices     \\
($r$) & ($n$)  &   ($k$)  &              & compatible OMs           &  compatible OMs     &   of OMs   \\
\hline
 5&8 & 2  &      107     &    105  &    3   &  3     \\
\hline
 5&9& 2  &      3\,266   &  2\,377  &    23  &  23     \\
\hline
 5&10& 2 &    330\,082   & 134\,554 &   432  &  432   \\
\hline
 5&11& 2 & 112\,442\,751  & 161\,597 & 13\,937 & 13\,937 \\
\hline 
 5&12& 2 & 74\,727\,217\,909 & 6\,735\,042 & 556\,144 & 556\,144 \\
\hline 
 6&9& 2  &    13\,862     &  11\,248 & 10\,825 &   126     \\
\hline
 6&10& 2 &  32\,054\,2731  & 8\,066\,523 & unk. & 159\,750 \\
\hline
 7&10& 3 &     6\,582    &  6\,582  &    37   &   37     \\
\hline
 7&11& 3 & 88\,234\,386   & 671\,352 & 42\,910 & 42\,910 \\
\hline
7&9&2   &      8\,324   &  8\,324   &   9    &    9   \\
\hline
7&10&2 &  5\,007\,497    & 1\,766\,098 &  unk.     &  4\,523     \\
\hline
 8&11& 3 & 14\,828\,993    & 9\,072\,880 & unk. & 35\,993 \\
\hline
8&10& 2  &    52\,535  &  46\,396   &   13    &    13   \\
\hline
8&11& 2  &  98\,660\,484 & 57\,512\,680  &  unk.     &  129\,968    \\
\hline
 9&12&4 & 1\,035\,430    & 669\,746 & 2\,592 & 2\,592 \\
\hline
\end{tabular}
\end{footnotesize}
\caption{Numbers of SAT solutions, compatible matroids and face lattices for $k$-neighborly oriented matroids of rank $r$ with $n$ elements.}
\label{tb:SATsolutions}
\end{table}

\subsection{Realizability}\label{sec:realizability}
The enumeration of combinatorially distinct face lattices of neighborly polytopes can be done by classifying neighborly oriented matroids into realizable and non-realizable. 
In this section we present a preliminary analysis, although the realizability of many neighborly matroids remains still undecided. The outcome of these results is shown in Table~\ref{tb:summarypolytopes}.

\subsubsection{Non-realizability certificates}\label{subsec:nonrealizability}

An efficient method to test non-realizability of oriented matroids is the \emph{biquadratic final polynomial (BFP)} method. 
This powerful technique produces non-realizability certificates based on linear programming relaxations of the Grassmann-Pl\"ucker relations.
It can decide all non-realizable oriented matroids in $\om{r}{n}$ for $(r,n)=(4,8),(3,9)$ and those that are uniform for $(r,n)=(3,10),(3,11)$~\cite{AAK02,AK07,BR90b,FMM13,R88}. 
The smallest known example of non-realizable oriented matroid with no BFP proof is in $\om{3}{14}$~\cite{R96}. 

We applied the BFP method to some classes of neighborly oriented matroids. We found a BFP non-realizability certificate for
\begin{itemize}[itemsep=0mm]
\item 1 out of 432 neighborly oriented matroids in $\om{5}{ 10}$,
\item 2 out of 13\,937 neighborly oriented matroids in $\om{5}{ 11}$, 
\item 811 of 42\,910 neighborly oriented matroids in $\om{7}{ 11}$, 
\item 6 of 2\,592 neighborly oriented matroids in $\om{9}{ 12}$. 
\item 63 %
neighborly oriented matroids in $\om{8}{11}$, where only one neighborly oriented matroid was picked up for each of the 35\,993 face lattices
      (therefore, it does not mean that these $63$ face lattices are non-polytopal).
\end{itemize}
The classes $\om{5}{12}$ and $\om{6}{10}$ were too large to perform this test.

Moreover, by oriented matroid duality, the realizability problem for uniform matroids in $\om{8}{11}$ is equivalent to that for uniform matroids in $\om{3}{11}$,
whose realizability is already completely classified~\cite{AK07}.
It is known that all non-realizable uniform oriented matroids in $\om{3}{11}$ can be detected by BFP.
Therefore, all the $35\,930$ neighborly oriented matroids in $\om{8}{11}$ that do not have a BFP non-realizability certificate are realizable.

\subsubsection{Realizability certificates}

The Gale sewing construction for neighborly polytopes, proposed in~\cite{P13}, provides a large family of neighborly oriented matroids. 
Gale sewing can be applied to any neighborly oriented matroid in $\om{r}{n}$ to obtain neighborly oriented matroids in $\om{r+2}{n+2}$.
Since this construction is based on lexicographic extensions, it always yields a realizable oriented matroid when it is applied to a realizable oriented matroid.

\begin{table}[h]
\centering\begin{footnotesize}
\begin{tabular}[t]{|c|c|}
\hline
		      & \# neighborly OMs \\
                      & (resp.\ face lattices) \\
(rank, \#elements)     & obtained by Gale sewing \\
                      &from a cyclic polytope \\
\hline
(5,8)  &      3    (3)       \\
\hline
(5,9)  &      18   (18) \\
\hline
(5,10) &    227  (227)  \\
\hline
(5,11) & 3\,614  (3\,614) \\
\hline
(6,9)  &  192 (47)      \\
\hline
\end{tabular}
\qquad\begin{tabular}[t]{|c|c|}
\hline
		      & \# neighborly OMs \\
                       & (resp.\ face lattices) \\
(rank, \#elements)     & obtained by Gale sewing \\
                      &from a cyclic polytope \\
\hline
(6,10)  &  52\,931 (8\,231)       \\
\hline
(7,10) &   28   (28)    \\
\hline
(7,11) & 9\,495  (9\,495)  \\
\hline
(9,12) & 975 (975)    \\
\hline
\end{tabular}

\end{footnotesize}
\end{table}

Note that all neighborly oriented matroids in $\om{5}{9}$ and $\om{7}{10}$ are realizable~\cite{AS73,BS87}.
Applying Gale sewing to them, we obtained $11\,165$ and $975$ neighborly polytopes in $\om{7}{11}$ and $\om{9}{12}$, respectively.
We did not compute the case $\om{5}{12}$ because of its size.

\subsubsection{Universal edges and $\om{9}{12}$}
\label{sec:ue_realizability}

In Section \ref{subsec:universal_edge} we classify our neighborly matroids according to their number of universal edges, which allows us to improve our study of realizability for neighborly matroids in $\om{9}{12}$.
An edge $\{e,e'\}$ of a neighborly oriented matroid ${\cal M}$ of odd rank is called \emph{universal} if the contraction ${\cal M}/\{e,e'\}$ is also neighborly. Universal edges correspond to \emph{inseparable} pairs of elements of the oriented matroid~\cite{RS91}.

In \cite{RS91} it is proved that every neighborly oriented matroid of rank $2k + 1$ with
$2k+ 4$ vertices that has at least $2k-4$ universal edges is realizable.
This implies that the $1\,968$ neighborly oriented matroids in $\om{9}{12}$ with at least $4$ universal edges are realizable.
Even more, the $2\,589$ neighborly oriented matroids in $\om{9}{12}$ with at least one universal edge are also easily
classified into realizable and non-realizable.
By the reduction sequence method in \cite{RS91}, realizability of these matroids is reduced to realizability
of rank $3$ oriented matroids with at most 11 elements, whose realizability is completely classified~\cite{AAK02,AK07,R88}.
As remarked in Section \ref{subsec:nonrealizability}, it is known that all non-realizable rank $3$ oriented matroids with up to $11$ elements
can be recognized by the BFP method. As a consequence, we know that $2\,583$ out of the $2\,589$ matroids in $\om{9}{12}$ with a universal edge are realizable and $6$ are non-realizable.

\subsubsection{Non-linear optimization}
\label{sec:ue_optimization}

With the BFP method we can decide realizability of all the neighborly matroids in $\om{9}{12}$ except for 
the three that do not have any universal edge (cf.~Section \ref{subsec:universal_edge}). These three matroids
are also realizable. We found realizations with the help of the \emph{SCIP Optimization Suite} \cite{SCIP}.
For example, the following three $8$-dimensional point configurations $\{p_i\}_{0\leq i\leq 11}$, $\{p'_i\}_{0\leq i\leq 11}$
and $\{p''_i\}_{0\leq i\leq 11}$ realize them:
\begin{scriptsize}
\begin{alignat*}{3}
p_0&=p'_0=p''_0=\mathbf{0};& p_i&=p'_i=p''_i=100 \cdot\mathbf{e}_i\qquad\text{ for }1\leq i\leq 8;\\
p_9&=(-6,-69,32,87,-13,100,6,61),\qquad&
p_{10}&=(99,-15,70,-95,-96,6,92,-6),\qquad &
p_{12}&=(24,-86,6,-17,-64,74,100,100);\\
p'_9&=(20,94,61,98,-10,-90,16,-75),&
p'_{10}&=(87,-3,-100,3,42,80,-97,-11),&
p'_{11}&=(-85,100,94,48,61,-100,99,-57);\\
p''_{9}&=(82,100,1,-83,-5,-42,33,-87),&
p''_{10}&=(72,32,36,-31,-72,-42,100,-30),&
p''_{11}&=(-100,-100,90,100,59,39,-28,72).
\end{alignat*}
\end{scriptsize}
This completes the enumeration of all neighborly $8$-polytopes with $12$ vertices.

\subsection{Quotients of neighborly polytopes}

\subsubsection{General quotients}
\label{sec:gquotient}

A longstanding open problem of Perles asks whether every combinatorial type of simplicial polytope appears as a quotient of an even dimensional neighborly polytope. This has been shown to hold for simplicial $d$-polytopes with $\leq d+4$ vertices \cite{K97}.

\begin{table}[h]
\centering\begin{footnotesize}
\begin{tabular}{|c|c|c|c|}
\hline
(rank,\#elements.)      & \# face lattices of      & \# face lattices of     & \# face lattices of \\
                      &   vertex figures         &   quotients by rank 2 subsets & quotients by rank 3 subsets \\
\hline
(6,10)                &      1\,137             &   23                  &    \\
\hline
(7,11)                &      23\,305              &   967               & 23 \\
\hline
(8,11)                &      4\,422               &   321               & 45 \\
\hline
(9,12)                &      5\,666               &   2\,408             & 298 \\
\hline
\end{tabular}
\end{footnotesize}
\end{table}

We computed the quotients of our neighborly oriented matroid polytopes and compared them with the list 
of combinatorial types of polytopes obtained in \cite{FMM13}. 
There are $322$ simplicial $5$-polytopes with $9$ vertices \cite{FMM13}. Among these, $321$ simplicial $5$-polytopes with $9$ vertices appear as quotients of neighborly oriented matroids
in $\om{8}{11}$. On the other hand, only $298$ simplicial $5$-polytopes with at most $9$ vertices appear as quotients of neighborly oriented matroids with $\om{9}{12}$.

The $5$-polytope $\PP$ with $9$ vertices whose facets are
\begin{footnotesize}
\begin{align*}
&[1 2 3 7 8] \ \ [1 2 3 7 9] \ \ [1 2 3 8 9] \ \ [1 2 5 7 9] \ \ [1 3 4 7 9] \ \ [1 2 5 8 9] \ \ [1 3 4 8 9] \ \ [2 3 6 7 9]  \ \
[2 3 6 8 9] \ \ [1 4 5 7 9] \ \ [1 4 5 8 9] \ \ [2 5 6 7 9] \\ 
&[3 4 6 7 9] \ \ [2 5 6 8 9] \ \ [3 4 6 8 9] \ \ [4 5 6 7 9] \ \ 
[4 5 6 8 9] \ \ [1 2 5 7 8] \ \ [1 3 4 7 8] \ \ [2 3 6 7 8] \ \ [1 4 5 7 8] \ \ [2 5 6 7 8] \ \ [3 4 6 7 8] \ \ [4 5 6 7 8]
\end{align*}
\end{footnotesize}
is not a quotient of a neighborly polytope whose oriented matroid is in $\om{7}{10}$, $\om{8}{11}$ or $\om{9}{12}$.
 (However, it is a quotient of neighborly polytopes in $\om{10}{13}$ and $\om{11}{14}$.)

This is indeed a face lattice of a realizable oriented matroid, and hence of a polytope. An affine Gale dual of a realization of this polytope is depicted in Figure~\ref{fig:gale}. Below we provide some geometric intuition for our claim.

 \begin{figure}[h]
 \begin{center}
 \includegraphics[scale=0.1, bb = 0 0 783 724, clip]{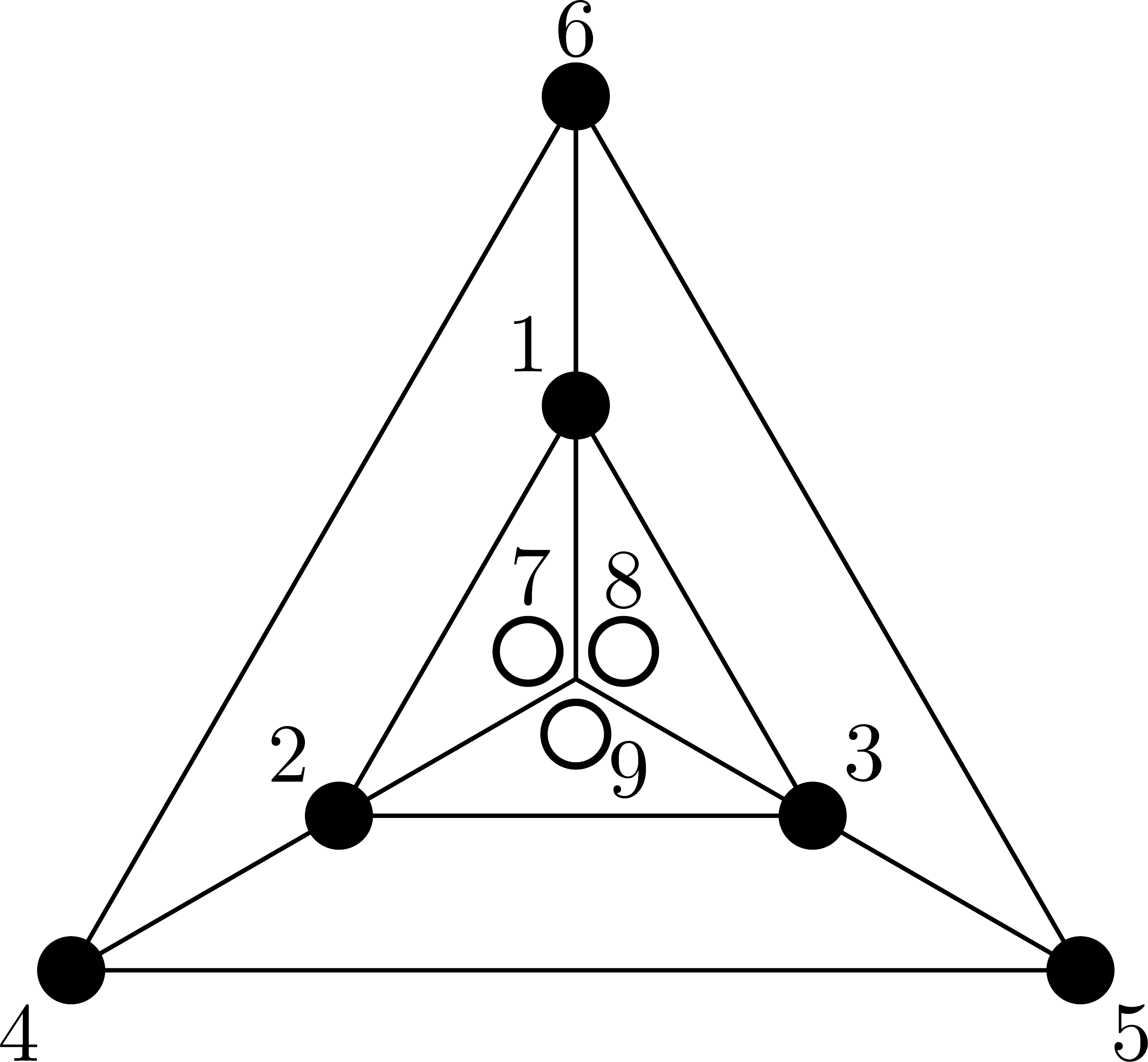}
 \caption{Affine Gale diagram of $\PP$.}
 \label{fig:gale}
 \end{center}
 \end{figure}
 
\begin{lem}
 Every neighborly polytope that has $\PP$ as a quotient must have at least $13$ vertices.
\end{lem}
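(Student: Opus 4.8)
The plan is to argue by contradiction: suppose $Q$ is a neighborly polytope with $n\leq 12$ vertices having $\PP$ as a quotient $Q/F$, where $F$ is a face of $Q$. Since $\PP$ is a $5$-polytope with $9$ vertices, $Q$ must have rank at least $7$; and if $F$ has $j$ vertices, then $n = 9 + j$ and $\rank(Q) = 7 + \rank(F) - 1$ roughly, so the relevant cases are precisely quotients by faces of rank $0$, $1$, $2$ or $3$, landing in $\om{7}{9}$ through $\om{9}{12}$ (a vertex figure, or a quotient by an edge, a triangle, or a tetrahedron). First I would record, using Lemma~\ref{lem:uniform} and the rigidity theorem, that each neighborly oriented matroid in these ranges is determined by its face lattice, so ``being a quotient'' is a purely combinatorial condition on the finitely many oriented matroids we have already enumerated. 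Then the statement reduces to the assertion, already recorded just above in the text, that $\PP$ does not appear in our computed lists of face lattices of quotients of neighborly oriented matroids in $\om{7}{10}$, $\om{8}{11}$ or $\om{9}{12}$ — together with the easy observation that lower coranks (the cases $\om{7}{9}$, $\om{8}{10}$, $\om{7}{10}$ again) are subsumed or vacuous since $\PP$ has $9$ vertices and a quotient of an $n$-element matroid by a rank-$t$ flat has $n - t$ elements.

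The crux, which I would present as the core of the argument, is a direct combinatorial certificate drawn from the Gale diagram in Figure~\ref{fig:gale} rather than a bare appeal to exhaustive computation. If $Q/F = \PP$ with $Q$ neighborly of rank $r$, then passing to Gale duals, the affine Gale diagram of $\PP$ (a configuration of $9$ points/vectors in dimension $9-5-1 = 3$, i.e.\ a $2$-dimensional affine picture with multiplicities/signs as in the figure) must be obtainable from the Gale diagram of $Q$ by deleting the $|F|$ points dual to $F$ and \emph{projecting}. I would isolate a sub-configuration of the Gale diagram of $\PP$ that is ``coface-rigid'' in the sense that any point configuration projecting onto it, with only one, two or three extra points of total multiplicity at most $3$, is forced to violate neighborliness of $Q$: concretely, neighborliness of $Q$ translates via the Gale dual into a Radon-partition condition (no ``small'' positive circuit in the dual), and the figure exhibits a collinearity (or a point in the convex hull of few others) that cannot be ``lifted away'' by adding so few points while keeping all the required sign conditions. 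The main obstacle will be making this lifting-obstruction argument clean: one must show that no admissible single-, double- or triple-point extension of any valid Gale diagram of a neighborly $Q$ can specialize to the configuration in Figure~\ref{fig:gale}, and the honest way to do this for all three coranks is to invoke the enumeration — so the proof will ultimately combine a short geometric explanation of \emph{why} the obstruction occurs (the highlighted collinearity in the affine Gale diagram forces a non-face that neighborliness forbids) with the statement that an exhaustive check over the enumerated face lattices of $\om{7}{10}$, $\om{8}{11}$, $\om{9}{12}$ confirms $\PP$ is absent from every quotient list.

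Finally I would close the loop on the parenthetical remark, noting that in $\om{10}{13}$ and $\om{11}{14}$ the extra room (four or five dual points, enough to place a point in general position breaking the offending collinearity) does allow the construction, which is consistent with Kalai's result \cite{K97} and shows the bound $13$ in the lemma is sharp. I expect the bulk of the write-up to be the reduction in the first paragraph (counting which $(r,n)$ pairs arise and checking coranks below the enumerated ones are vacuous) plus the geometric description of the Gale-diagram obstruction; the verification itself is delegated to the database, exactly as for the other computational claims in this section.
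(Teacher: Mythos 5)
Your overall framing — dualize, observe that the affine Gale diagram of $\PP$ must sit inside the Gale diagram of $Q$, and exploit the balancing condition that neighborliness imposes on Gale diagrams — is indeed the approach the paper sketches. But there are two substantive problems.

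First, the ``deleting the $|F|$ points dual to $F$ and \emph{projecting}'' step is wrong, and dropping it is what makes the argument work. If $\PP=Q/F$ with $Q$ of rank $r$ on $n$ elements and $F$ of rank $j$, then $r=6+j$ and $n=9+j$, so $\corank Q=n-r=3$ for every admissible $j$. Since $(\mathcal{M}_Q/F)^*=\mathcal{M}_Q^*\setminus F$, the Gale dual of $\PP$ is literally a subconfiguration of the Gale dual of $Q$ \emph{in the same $2$-dimensional affine picture} — no projection occurs. The whole point is that Figure~\ref{fig:gale} is fixed and the only freedom is to insert at most three extra signed points into it. Introducing a projection would change the ambient dimension with $j$ and dissolve the rigidity you need.

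Second, you defer the actual verification to the enumerated database, but the paper's proof idea is a self-contained geometric argument and does not appeal to the computation. Using property~\ref{it:balanced} (the ``balanced'' condition of Sturmfels/Padrol, not a generic Radon obstruction), one examines the lines spanned by the points labelled $1$, $2$, $3$ in Figure~\ref{fig:gale} and deduces that any two or three added points would all have to be black and lie inside the triangle $\{1,2,3\}$; but then the line through $1$ and $4$ remains unbalanced, contradiction. Your proposal gestures at ``a collinearity that cannot be lifted away'' without identifying this specific obstruction, and then retreats to the enumeration. To match the paper you need to replace the appeal to the database with the explicit balancing argument on the two families of lines.

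Your reduction to the coranks/ranks $\om{7}{9}$ through $\om{9}{12}$ and the closing remark about $\om{10}{13}$ and $\om{11}{14}$ are fine and consistent with the paper's parenthetical.
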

\begin{proofidea}
We show that $12$ vertices is not enough. For this we use two properties of affine Gale duals (for which we refer to \cite{OM}), and argue on Figure~\ref{fig:gale} (although we argue with a concrete realization, it is easily seen that similar arguments carry on to any affine Gale diagram of $\PP$):
 \begin{enumerate}
  \item If $\PP$ is a quotient of $Q$, then its affine Gale dual is obtained by removing some points from that of~$Q$.
  \item\label{it:balanced} If $Q$ is a  neighborly polytope, then its affine Gale dual $A$ has the following property: 
  for every hyperplane $H$ spanned by $A$, the number of black points in $H^+$ minus the number of white points 
  in $H^+$ differs at most by one with the number of black points in $H^-$ minus the number of white points in $H^-$ (see \cite[Proposition~3.2]{St88} or \cite[Proposition~2.4]{P13}).
 \end{enumerate}
 Hence, we need to study how to add points to the Gale diagram of $\PP$ so that it fulfills property~\ref{it:balanced}.  Now, looking at the lines spanned by the points $1$, $2$ and $3$ we see that if we added two or three points, they should all be black and lie in the triangle spanned by $1$, $2$ and $3$. However, then the line spanned by $1$ and $4$ would still be unbalanced.
\end{proofidea}

Kortenkamp proved that every simplicial $d$-polytope with $d+4$ vertices is a quotient of a neighborly $(2d+4)$-polytope with $2d+8$ vertices~\cite{K97}.
It is easy to construct examples of simplicial $d$-polytopes with $d+4$ vertices that cannot be a quotient of any (even-dimensional) neighborly polytope
 with less than $2d+2$ vertices: any that has a missing edge.
This polytope is the first example where the trivial bound is not enough, and shows that Kortenkamp's result 
cannot be improved to a quotient of a neighborly $(2d-2)$-polytope with $2d+2$ vertices.

\medskip

We also observed that all $23$ simplicial $3$-polytopes with at most $8$ vertices appear as quotients of neighborly oriented matroids
in $\om{7}{11}$. Another observation is that the number of 
all simplicial matroid polytopes of rank $5$ with $9$ vertices ($1143$, 1 of which is non-polytopal)~\cite{ABS80} 
and that of the vertex figures of neighborly matroid polytopes of rank $6$ with $10$ vertices
differ by $6$.

\subsubsection{$(m-1)$-stacked $(m-1)$-neighborly polytopes}
A simplicial $d$-polytope is called \emph{$k$-stacked} if it admits a triangulation with no interior faces of dimension $d-k-1$ \cite{MN13}, 
and $1$-stacked polytopes are called simply \emph{stacked polytopes}. Every vertex figure of a neighborly $2m$-polytope with $n$ vertices is 
an $(m-1)$-stacked and $(m-1)$-neighborly $(2m-1)$-polytope with $n-1$ vertices. 
The converse statement, whether every combinatorial type $(m-1)$-stacked and $(m-1)$-neighborly $(2m-1)$-polytope appears as a vertex figure of a neighborly
polytope, 
has been investigated for small values of $n$ in the cases $m=2,3$ \cite{A77, AS73, BS87}, but no counterexample has been found yet. 
This problem was first posed by Altshuler and Steinberg for vertex figures of $4$-polytopes and stacked $3$-polytopes~\cite[Problem~1]{AS73}, 
and then extended to higher dimensional polytopes by Bokowski and Shemer~\cite{BS87}.

 \paragraph{Stacked $3$-polytopes:}
We study the oriented-matroid version of Altshuler and Steinberg's question, i.e.,
is every rank $4$ stacked matroid polytope a vertex figure of a neighborly matroid polytope of rank~$5$?
We generated all the possible combinatorial types of stacked $3$-polytopes with up to $11$ vertices and compared them with all the possible vertex figures (i.e., quotients by one element) of rank $5$ neighborly oriented matroids. The results are depicted in the following table (FL abbreviates face lattice).	
\begin{table}[h]
\centering
\begin{footnotesize}
\begin{tabular}{|c|c|c|}
\hline
$n$     & \# FL of vertex figures& \# FL of stacked  \\
        &  from $\om{5}{n+1}$      &  $3$-polytopes $n$ vertices \\
\hline
5  &      1   &   1  \\
\hline
6 &       1   &   1  \\
\hline
7 &       3   &   3  \\
\hline
\end{tabular}
\quad\begin{tabular}{|c|c|c|}
\hline
$n$     & \# FL of vertex figures& \# FL of stacked  \\
        &  from $\om{5}{n+1}$      &  $3$-polytopes $n$ vertices \\
\hline
9 &      24   &   24    \\
\hline
10 &     93   &   93 \\
\hline
11 &     434   &   434 \\
\hline
\end{tabular}
\end{footnotesize}
\end{table}

Observe that every stacked $3$-polytope with at most $11$ vertices appears as a vertex figure
of a rank~$5$ neighborly matroid polytope.
Among the 93 stacked $3$-polytopes with $10$ vertices, only $85$ appear as vertex figures of the neighborly matroid polytopes of rank~$5$ 
constructed by Gale sewing, which means that a hypothetical proof of the conjecture cannot be based on this construction.

\paragraph{$2$-stacked $2$-neighborly polytopes:}
In~\cite[Concluding remark (2)]{BS87}, Bokowski and Shemer posed the question of whether every simplicial $2$-stacked  $2$-neighborly $5$-polytope 
appears as a vertex figure of a neighborly $6$-polytope. 
There are $126$ simplicial ($2$-)neighborly $5$-polytopes with $9$ vertices.
We checked using TOPCOM~\cite{TOPCOM,TOPCOMpaper} which of them admitted a $2$-stacked triangulation. 
(If $m \leq \frac{d-2}{2}$, one can check $m$-stackedness of polytopes and spheres directly from their $f$-vectors 
by the Generalized Lower Bound Theorem~\cite{MN13}; but we deal with the case $m>\frac{d-2}{2}$.) 
It turned out that $55$ of them are $2$-stacked and hence they completely coincide with the vertex figures of $6$-polytopes with $10$ vertices.
This gives an affirmative answer to the first non-trivial case of the question posed by Bokowski and Shemer~\cite{BS87}, while the general question remains open. 
 
\subsection{Extendability}

An old open problem posed by Shemer in~\cite{S82} asks whether every neighborly
oriented matroid of odd rank can be extended to a neighborly oriented matroid.
That is, if for every ${\cal M}\in\om{2r+1}{n}$, there is a neighborly oriented
matroid ${\widehat {\cal M}}\in\om{2r+1}{n+1}$ such that ${\widehat {\cal M}}\setminus \{n+1\}={\cal
M}$.  We observed that every neighborly oriented matroid in $\om{5}{\leq 11} $
and $\om{7}{\leq 10}$ can be extended to a neighborly oriented matroid. 

\subsection{Facet-ridge graphs}
The {\it facet-ridge graph} of a polytope is that formed by connecting its facets that have common ridges.
It is a fundamental object of study in polytope theory \cite{BDHS13,BS11,K66,S12}. 

\subsubsection{Diameters}
The Hirsch conjecture stated that the facet-ridge graph diameter of a polytope $P$ was always not greater than $n-d$, 
where $n$ and $d$ are the number of vertices and the dimension of $P$ respectively. 
Although the conjecture has been disproved by Santos~\cite{S12}, the smallest counterexamples are still high dimensional. 
In particular, the Hirsch conjecture is still open in dimension~$4$. 
We compute all possible the diameters for neighborly matroid polytopes of small rank and corank, and state the number of instances that attain it. 
Note that the Hirsch conjecture is known to hold for all the classes of neighborly polytopes that we computed \cite{BDHS13,BS11}. 
For the duals of cyclic polytopes, the Hirsch conjecture is known to hold~\cite{K66}, and Kalai gave a polynomial upper bound $d^2(n-d)^2\log n$ of the diameters of the duals of neighborly $d$-polytope with $n$ vertices~\cite{K91}.
\begin{table}[h]
\centering
\begin{footnotesize}
\begin{tabular}[t]{|c|c|c|}
\hline
(rank, \#elements)     &    diameter[number of instances] \\
\hline
(5,9)    & 4 [22], 5 [1]  \\
\hline
(5,10)   &   4 [1], 5 [431]          \\
\hline
(5,11)  &      5 [7\,445], 6 [6\,492]     \\
\hline
(5,12)  &      5 [5], 6 [554\,374], 7 [1\,765]     \\
\hline
(6,10) &      4 [16], 5 [159,732]      \\
\hline
\end{tabular}
\quad\begin{tabular}[t]{|c|c|c|}
\hline
(rank, \#elements)     &    diameter[number of instances] \\
\hline
(7,10) &      4 [37]      \\
\hline
(7,11) &       5 [42\,910]     \\
\hline 
(8,11)  &      4 [35\,993]       \\
\hline
(9,12) &     4 [2\,592]        \\
\hline
\end{tabular}
\end{footnotesize}
\end{table}
\\
Moreover, all uniform $2$-neighborly oriented matroids in $\om{7}{10}$ and $\om{8}{11}$ were observed to have diameter $4$.
Based on these results, it is natural to ask whether the facet-graph of every $2$-neighborly uniform oriented matroid of corank $3$ with at least $8$ elements has diameter $4$ 
(it is always at most $4$ \cite{K64}).

\subsubsection{Hamiltonian circuits}
In \cite{K66}, Klee proved that the facet-ridge graph of a cyclic polytope always has a Hamiltonian circuit and asked whether
the same holds for every even-dimensional neighborly polyopes. 
We verified that this property is indeed shared by all our simplicial neighborly and $2$-neighborly polytopes (and matroid polytopes). 

\subsection{Number of universal edges}
\label{subsec:universal_edge}
Recall that a universal edge of a neighborly oriented matroid of odd rank is and edge whose contraction is still neighborly. 
Universal edges play an essential role in the construction techniques for neighborly polytopes~\cite{P13, S82} and are also important to check realizability of neighborly oriented matroids, as explained in Section~\ref{sec:ue_realizability}.
The number of universal edges of a neighborly oriented matroid of odd rank with $n$ elements is one of $0,1,\dots,n-2,n$ \cite{S82}. 
We enumerated our oriented matroids of odd rank according to their number of universal edges. The results are displayed in Table~\ref{tab:ue}.

\begin{table}[h]
\centering\begin{footnotesize}
\begin{tabular}{|c|c|}
\hline
(rank, \#elements)     & $m$ [\# neighborly oriented matroids  with $m$ universal edges]\\
\hline
(5,10) &    0 [1], 1 [2], 2 [17], 3 [85], 4 [159], 5 [114], 6 [40], 7 [11], 8 [2], 9 [0], 10 [1]   \\
\hline
(5,11) &   0 [20], 1 [213], 2 [1\,145], 3 [3\,463], 4 [4\,897], 5 [3\,040], 6 [965], 7 [170], 8 [21], 9 [2], 10 [0], 11 [1]  \\
\hline 
(5,12) & 0 [1\,454], 1 [14\,260], 2 [61\,870], 3 [148\,892], 4 [181\,944], 5 [108\,619],  \\
       & 6 [32\,997], 7 [5\,516], 8 [550], 9 [38], 10 [3], 11 [0], 12 [1] \\
\hline
(7,11) & 0 [221], 1 [2\,161], 2 [8\,479], 3 [14\,093], 4 [11\,298], 5 [5\,110], 6 [1\,329], 7 [195], 8 [21], 9 [2], 10 [0], 11 [1]    \\
\hline
(9,12) & 0 [3], 1 [33], 2 [169], 3 [419], 4 [695], 5 [655], 6 [402], 7 [165], 8 [40], 9 [8], 10 [2], 11 [0], 12 [1]     \\
\hline
\end{tabular}
\end{footnotesize}
\caption{Classification with respect to the number of universal edges.}\label{tab:ue}
\end{table}

So far, only one example of neighborly oriented matroid without any universal edge was known, in $\om{5}{10}$~\cite{BSt87}. We found new examples in the new classes. 
In particular, Richter and Sturmfels asked in \cite{RS91} whether there exists a neighborly $2k$-polytope with $2k+ 4$ vertices
without universal edges (Problem 5.5).
There are three neighborly oriented matroids in $\om{9}{12}$ that do not have any universal edges, and they are realizable
(see Section~\ref{sec:ue_optimization}), which answers this question in the affirmative.

\subsection{Edge-valence matrices}
Altshuler and Steinberg~\cite{AS73} defined \emph{edge-valence matrices} and
used their determinants as invariants of combinatorial types of neighborly polytopes.
The edge-valence matrix of a neighborly polytope $N$ with vertices $v_1,\dots,v_n$ 
is an $n \times n$ matrix $A=(a_{ij})$, where
$a_{ij}$ is the number of facets of $P$ that contain the edge $\{ v_i, v_j \}$
for $i,j=1,\dots,n$.

On the other hand, Bokowski and Shemer~\cite{BS87} used a modified version of 
edge-valence matrices, based on the notion of missing faces.
The convex hull $F$ of some vertices of a polytope $P$ is a \emph{missing face} of $P$
if $F$ is not a face of $P$. The notion of missing faces plays a fundamental role in neighborly polytope theory, see \cite{S82}.
They consider the matrix $M=(m_{ij})$, where
$m_{ij}$ is the number of missing faces of $P$ that contain $v_i$ and  $v_j$
for $i,j=1,\dots,n$.

We computed the determinants of these two versions of edge-valence matrices for our neighborly oriented matroids.

\begin{table}[h]
 \centering\begin{footnotesize}
\begin{tabular}{|c|c|c|c|}
\hline
(rank, \#elements)  &  {\pbox{5cm}{\raisebox{-.25cm}{\# FL of}\\neigh. OMs}} & \# $\det A$              & \# $\det M$  \\
\hline
(5,8)             & 3   &      3                    &  3        \\
\hline
(5,9)            &  23   &      23                   &   23       \\
\hline
(5,10)          &  432     &     429                  &  432        \\
\hline
(6,10)           & 159\,750     &     159\,364              &  32\,329      \\
\hline
(7,10)           & 37    &     37                  &  37           \\      
\hline
\end{tabular}\quad
\begin{tabular}{|c|c|c|c|}
\hline
(rank, \#elements)  &  {\pbox{5cm}{\raisebox{-.25cm}{\# FL of}\\neigh. OMs}} & \# $\det A$              & \# $\det M$  \\
\hline
(5,11)             &   32\,937 &    13\,936                &  13\,937         \\
\hline
(7,11)            & 42\,910    &     42\,903               &  42\,897         \\
\hline
(8,11)            & 35\,993    &      35\,993             &    523       \\
\hline
(5,12)            & 556\,144    &      556\,055              &    556\,141      \\
\hline
(9,12)            & 2\,592    &      2\,592              &     2\,588      \\
\hline
\end{tabular}
\end{footnotesize}
\end{table}

This computation shows that $\det A$ and $\det M$ do not completely distinguish combinatorial types
of neighborly (matroid) polytopes but that they are very powerful invariants.
In \cite{BS87}, the authors observed that the values of $\det M$ separate polytopes and non-polytopal spheres (in dimension $6$ with $10$ vertices).
In particular, they observed that $\det M$'s of non-polytopal spheres are greater than those of polytopal ones.
Combined with the results from Section~\ref{sec:realizability}, our computaion shows that the observation is not true any more in $\om{5}{10}$.

For further study, it might be interesting to study eigenvalues of $A$ and $M$.
In particular, it would be interesting to investigate when $\det A = 0$ holds.

\subsection{Sharp instances of the generalized upper bound conjecture}
Uli Wagner posed the following question (personal communication).
\begin{question}\label{que:wagner}
 Is there, for every possible rank and number of elements, a (realizable) oriented matroid~$\cal M$ with an element~$e$, such that ${\cal M}\setminus e$ is neighborly and ${\cal M}^*\setminus e$ is also neighborly.
\end{question}

The context of this problem is the search for sharp instances of the \emph{affine generalized upper bound conjecture (AGUBC)} \cite{W06}. 
For an arrangement ${\cal A}$ of affine halfspaces, let $v_{\leq \ell}({\cal A})$ be the number of vertices of ${\cal A}$ at level at most $\ell$ (cf. \cite{W06}). 
The AGUBC states that for every arrangement $\cal A$ of $n$ affine halfspaces in $\mathbb R^d$, 
$$v_{\leq \ell}({\cal A})\leq v_{\leq \ell}({\cal C}^*_ {n,d})$$
for $0\leq \ell\leq n-d$. Here, ${\cal C}^*_ {n,d}$ is the polar-to-cyclic spherical arrangement. 
Observe that for $\ell=0$ the AGUBC is just the upper bound theorem for convex polytopes.

In oriented matroid language, the conjecture says that the number of cocircuits $X$ with $|X^-|\leq \ell$ and $X_g=+$ in any (realizable) affine oriented matroid $({\cal M},g)$ (see \cite[Section 4.5]{OM}) is never larger than the number of cocircuits $X$ with $|X^-|\leq \ell$ of a uniform neighborly matroid.

The AGUBC is proved up to a factor of $2$ by Wagner~\cite{W06}. However, there are no lower bounds for $\max_{{\cal A}} v_{\leq \ell}({\cal A})$ when $v_{\leq \ell}$ is in the range $\ceil{n/(d+1)}\leq \ell \leq {(n-d-1)/2}$. 

Now, call a point configuration $S$ \emph{$\ell$-centered around $o$} if every affine hyperplane through $o$ contains at least $\ell-1$ points of $S$ in each of the open halfspaces that it defines.
Any neighborly $d$-polytope with $n$ vertices that is $\ell$-centered around a point $o\in \mathbb R^d$ 
provides a sharp instance of the AGUBC for $\leq\ell$-levels. Indeed, any polar-to-neighborly arrangement has the same number of elements at any level, and the $\ell$-centeredness condition ensures that all these vertices are affine (i.e. that they belong to the ``northern hemisphere'' polar to $o$, cf.~\cite{W06}).
For the particular case $\ell ={(n-d-1)/2}$, the centeredness condition is easily seen to be equivalent to being the Gale dual of a neighborly polytope (see, for example, \cite{P13}).

However, we found that there is only one oriented matroid in $\om{5}{10}$ that admits an extension fulfilling the condition of Question~\ref{que:wagner}
and none in $\om{5}{12}$ and $\om{7}{12}$. 

The equality case of the AGUBC is conjectured to hold only for polar-to-neighborly arrangements. Our results show that, in this case, the AGUBC would not always be sharp for the levels $\leq {(n-d-1)/2}$ of $d$-dimensional arrangements of $n$ affine halfspaces.

\subsection{Dual surrounding property}
In \cite{HPT08}, Holmsen, Pach and Tverberg studied \emph{$k$-surrounding sets}.
A finite point set $P \subseteq \mathbb{R}^d$ is said to possess the \emph{property $S(k)$} if for every $Q \subseteq P$ with $|Q|=k$ there exists an $R \subseteq P \setminus Q$ with $|R|=d+1-k$ such that $0 \in conv(Q \cup R)$. 

The Gale dual notion for $k$-surrounding sets is the \emph{property $S^*(k)$}.
A finite point set $\widetilde{P} \subseteq \mathbb{R}^{d}$ is said to possess the \emph{property $S^*(k)$}
if for every $\widetilde{Q} \subseteq \widetilde{P}$ with $|\widetilde{Q}|=n-k$, 
there exists an $\widetilde{R} \subseteq \widetilde{Q}$ with $|\widetilde{Q}|=d$ that forms a facet of $conv(\widetilde{P})$.
A point configuration $P \subseteq \mathbb{R}^d$ has the property $S(k)$ if and only if its Gale dual $P^* \subseteq \mathbb{R}^{n-d-1}$ fulfills the property $S^*(k)$.

In this context, the authors of \cite{HPT08} remark that the cyclic $(n-2k+1)$-polytope with $n$ vertices has property $S^*(k)$ when $n$ is odd. Motivated by this observation, we wanted to explore whether there is a stronger relation between neighborliness of $(n-2k+1)$-polytopes with $n$ vertices and the property $S^*(k)$.

\paragraph{Neighborliness does not imply $S^*(\ffloor{n-r+2}{2})$:}
We made some computational experiments to check for which values of $k$ do our neighborly oriented matroids possess the property $S^*(k)$.

\begin{table}[h]
\centering
\begin{footnotesize}
\vspace{0pt}
\begin{tabular}[t]{|c|c|c|}
\hline
(rank,\#elements)     &    $k$ [\# instances with property $S^*(k)$ \\
                      & but not property $S^*(k+1)$] \\
\hline
(5,8)    & 2 [3]  \\
\hline
(5,9)    & 2 [2], 3 [21]  \\
\hline
(5,10)   &   3 [432]          \\
\hline
(5,11)  &      3 [1\,533], 4 [12\,404]     \\
\hline
(5,12)  &       4 [556\,144]   \\
\hline
(6,9)  &      1 [19], 2 [107]     \\
\hline
\end{tabular}
\quad
\vspace{0pt}\begin{tabular}[t]{|c|c|c|}
\hline
(rank,\#elements)     &    $k$ [\# instances with property $S^*(k)$ \\
                      & but not property $S^*(k+1)$] \\
 \hline
(6,10) &       1 [424], 2 [158\,067], 3 [1\,259]    \\
\hline
(7,10) &       2 [37]     \\
\hline
(7,11) &       2 [6\,717], 3 [36\,193]     \\
\hline 
(8,11)  &      1 [1\,673], 2 [34\,320]       \\
\hline
(9,12) &     2 [2\,592]        \\
\hline
\end{tabular}\end{footnotesize}
\end{table}

We observe that simplicial neighborly matroid polytopes of rank $r$ on $n$ elements
do not always satisfy property $S^*(\ffloor{n-r+2}{2})$, not even when both $n$ and $r$ are odd.
On the other hand, those of odd rank in our database satisfy property $S^*(\ffloor{n-r}{2})$. Hence, 
it could be that 
this behavior holds in general.
Even more, it could be that if both $r$ and $n-r$ are odd, then 
every neighborly matroid polytope of rank $r$ on $n$ elements satsfies property $S^*(\ffloor{n-r+1}{2})$
but none does property $S^*(\ffloor{n-r+1}{2}+1)$.

\paragraph{$S^*(\ffloor{n-r+2}{2})$ does not imply neighborliness:} The rigid realizable uniform matroids of rank $3j-3$ and with $3j$ elements constructed by Sturmfels in \cite[Theorem 5.2]{St88} possess the property $S^*(2)$, yet none of them is neighborly. Is there such an example with $n-r$ even?

\section*{Acknowledgements}
We are indebted to Francisco Santos and Raman Sanyal for many interesting conversations, and
to Moritz Firsching for introducing us to \emph{SCIP} for the tests in Section~\ref{sec:ue_optimization}.
The first author is partially supported by JSPS Grant-in-Aid for Young Scientists (B) 26730002.
The research of the second author is supported by the DFG Collaborative
Research Center SFB/TR~109 ``Discretization in Geometry and Dynamics.''
Part of the experimental results in this research were obtained using supercomputing resources at Cyberscience Center, Tohoku University.

\bibliographystyle{amsplain}
\bibliography{enumeration}

\end{document}